\newtheorem{theorem}{Theorem}[section]
\newtheorem{corollary}[theorem]{Corollary}
\newtheorem{proposition}[theorem]{Proposition}
\newtheorem{definition}[theorem]{Definition}
\newtheorem{example}[theorem]{Example}
\newtheorem{examples}[theorem]{Examples}
\newcommand{\End}{{\rm End}}
\def\bea{\begin{eqnarray*}}
\def\eea{\end{eqnarray*}}
\newcommand{\ra}{\rightarrow}
\def\smashco{\mathrel>\joinrel\mathrel\triangleleft}
\newcommand{\Mm}{\mathcal{M}}
\def\CC{{\mathbb C}}
\def\KK{{\mathbb K}}
\def\ZZ{{\mathbb Z}}
\begin{document}

\sloppy

\title[Semiperfect and coreflexive coalgebras]{Semiperfect and coreflexive coalgebras}

\subjclass[2010]{16T15, 16T05, 05C38, 06A11, 16T30}
\keywords{non-unital algebra, semiperfect coalgebra, coreflexive
coalgebra, coalgebra, Hopf algebra with non-zero integral}

\begin{abstract}
We study non-counital coalgebras and their dual non-unital
algebras, and introduce the finite dual of a non-unital algebra.
We show that a theory that parallels in good part the duality in
the unital case can be constructed. Using this, we introduce a new
notion of left coreflexivity for counital coalgebras, namely, a
coalgebra is left coreflexive if $C$ is isomorphic canonically to
the finite dual of its left rational dual $Rat(_{C^*}C^*)$. We
show that right semiperfectness for coalgebras is in fact
essentially equivalent to this left reflexivity condition, and we
give the connection to usual coreflexivity. As application, we
give a generalization of some recent results connecting dual
objects such as quiver or incidence algebras and coalgebras, and
show that Hopf algebras with non-zero integrals (compact quantum groups) are coreflexive.
\end{abstract}

\author{S.D\u{a}sc\u{a}lescu${}^1$ and M.C. Iovanov${}^{1,2}$}
\address{${}^1$University of Bucharest, Facultatea de Matematica\\ Str.
Academiei 14, Bucharest 1, RO-010014, Romania}
\address{${}^2$University of Iowa\\
MacLean Hall, Iowa City, Iowa 52246, USA}
\address{e-mail: sdascal@fmi.unibuc.ro, iovanov@usc.edu, yovanov@gmail.com}

\date{}

\maketitle

\section{Introduction and Preliminaries}

Let $\KK$ be a field and let $\Gamma$ be a quiver. Two
combinatorial objects that are important in representation theory
are associated with $\Gamma$: the quiver algebra $\KK [\Gamma]$,
and the path coalgebra $\KK\Gamma$. The quiver algebra is an
algebra with enough idempotents, but does not have a unit unless
$\Gamma$ is finite, while the path coalgebra $\KK\Gamma$ is a
coalgebra with counit. In \cite{DIN1} it was investigated how one
of the two objects can be recovered from the other one; a finite
dual coalgebra $A^0$ was constructed for an algebra $A$ with
enough idempotents, and it was proved that $\KK \Gamma$ is
isomorphic to $\KK [\Gamma]^0$ provided that $\Gamma$ has no
oriented cycles and between any two vertices of $\Gamma$ there are
finitely many arrows. On the other hand, the algebra $\KK
[\Gamma]$ can be recovered as the left (or right) rational part of
$(\KK\Gamma)^*$, provided that for any vertex $v$ of $\Gamma$
there are finitely many paths starting at $v$ and finitely many
paths ending at $v$ (see \cite{DIN1}). A consequence of these
results is that $\KK \Gamma$ is coalgebra isomorphic to
$((\KK\Gamma)^{*rat})^0$ for certain general enough $\Gamma$.
Parallel results are obtained for incidence (co)algebras of a
locally finite partially ordered set $X$, in which case the role
of $\KK \Gamma, (\KK\Gamma)^*$ and $\KK[\Gamma]$ is played by the
incidence coalgebra $\KK X$, the incidence algebra $IA(X)$, and
the finite incidence algebra $FIA(X)$, consisting of functions in $IA(X)$ of
finite support. It is natural to ask whether a general context
that unifies these structures and results exists, for arbitrary
algebras and coalgebras.

The first step in obtaining such a context is having a well
behaved duality for non-unital algebras and coalgebras, that would
naturally include the (co)unital case. We extend the construction
of the finite dual coalgebra $A^0$ to the case of an arbitrary
algebra $A$, not necessarily with enough idempotents, and extend
the coalgebra theory to coalgebras without counit. We show that
several results on coalgebras, including the fundamental theorem
of coalgebras, carry over to non-counital coalgebras. These are
needed and used for the above mentioned unification, but may also
present some interest in their own. Some of the results from
coalgebra theory can be extended to the non-counital case with
parallel proofs, but other need new arguments avoiding the use of
the counit property. We explain how some results in the
non-counital case can be obtained by using a counitalization
construction for coalgebras, and the transfer of properties
through it. In Section 3 we construct $A^0$ for an arbitrary
algebra $A$, and we give equivalent characterizations for it. As
in the unital case, we show that locally finite representations of
$A$ are just corepresentations over its finite dual $A^0$.

(Co)reflexivity for algebras and coalgebras was introduced and
studied by R. Heyneman, D.E. Radford and E.Taft
(\cite{HR,Rad,T1,T2}) in tight connection to  the duality between
algebras and coalgebras. A coalgebra $C$ with counit is called
coreflexive if the canonical map $C\rightarrow (C^*)^0$ is
bijective. In other words, the coalgebra is recovered from its
dual algebra via the finite dual construction, i.e. it is its own
double dual. Coreflexivity is a finiteness condition, since it is
shown to be equivalent to every finite dimensional left
(equivalently, right) $C^*$-module being rational. Motivated by
finding a unifying context for the above mentioned combinatorial
results connecting the algebra and coalgebra structures associated
with quivers and partially ordered sets, in Section 4 we consider
another coreflexivity-type condition for a coalgebra $C$ with
counit. Instead of looking at the dual algebra of $C$, we look at
the rational left (or right) dual of $C$, regarded as a non-unital algebra
$Rat({}_{C^*}C^*)$ (or $Rat(C^*_{C^*})$), and consider the natural
map
 $\phi_l:C\rightarrow (Rat(_{C^*}C^*))^0$. We call a coalgebra $C$ \emph{left coreflexive} if
$\phi_l$ is an isomorphism. We first note a connection with
another well studied property for coalgebras, namely, the morphism
$\phi_l$ is injective if and only if $C$ is right semiperfect.
Recall that a coalgebra is right semiperfect if the category of
right comodules has enough projective objects, or equivalently,
indecomposable injective left comodules are  finite dimensional.
Moreover, if $C$ is right semiperfect and the coradical
 $C_0$ is coreflexive, we prove that
$\phi_l$ is an isomorphism. In this case, we show that $C$ is also
coreflexive in the sense of Radford and Taft. This situation
appears in \cite{DIN1}, for the case of coalgebras associated to
certain general quivers or PO-sets. We note that the condition
that $C_0$ is coreflexive is not very restrictive, and in fact
$C_0$ is essentially always coreflexive if $\KK$ is infinite.
Thus, this effectively gives an unexpected interpretation of
semiperfect coalgebras, as equivalent to a coreflexivity (or self
double dual) type of condition. Finally, we give an example
showing that a coreflexive coalgebra is not necessarily left (nor
right) coreflexive.

If $C$ is left and right semiperfect, we show that $\phi_l$ is
always bijective (independent of the supplementary assumption on
$C_0$), thus unifying and extending the results about path
coalgebras and incidence coalgebras mentioned above.

We also give an application and interpretation of our results as a
a double dual property of Hopf algebras with non-zero integrals;
many interesting quantum groups are such Hopf algebras. There are
many well known duality pairings for classes of Hopf algebras and
quantum groups. Let $G$ be an algebraic group over $\CC$, and let
$L$ be its Lie algebra. Let $H$ be the Hopf algebra of functions
on $G$. If $G$ is simply connected, then $H$ is the finite dual of
$U(L)$, i.e. $H\cong U(L)^0$. This duality is also preserved at
the quantum level, for example, there is an isomorphism
$SL_n(q)\cong U(sl_n(q))^0$ (\cite{K}). The group $G$ can be
recovered as the group $G(H^0)$ of grouplike elements in $H^0$,
and, in fact, we again have an isomorphism $H^0\cong
\CC[G]\smashco U(L)$, a smash product, by the theorems of Cartier,
Gabriel, Kostant, Milnor and Moore (see also \cite[Remark
3.7.3]{C}). For example, when $G$ is the additive algebraic group
$(\CC,+)$, then $H=\CC[X]$, and $H^0=\CC[G]\smashco U(L)$, where
$L$ is the abelian Lie algebra of dimension $1$. This example is
considered also in \cite[page 1123]{T1} (for countable
algebraically closed fields), but one sees that the coalgebra
(Hopf algebra) $H^0$ for $H=\CC[X]$ here is coreflexive (see
\cite{T1}).

It is natural to ask if there is some reflexivity (duality) property of
any of these classes of algebras. 
One motivation is the duality between commutative Hopf algebras
and algebraic groups, as noted above: $G\longmapsto H=O(G)$ =
algebra of functions on $G$, $H\longmapsto G=G(H^0)$. In fact the
above mentioned example of coreflexive Hopf algebra in \cite{T1}
holds more generally for every finite dimensional Lie algebra $L$.
Namely, using results in \cite{HR,I2}, it is not difficult to see
that if $L$ is a finite dimensional (say over $\CC$), $U(L)$ is
coreflexive. Moreover, with notations for $G$ and $H$ as above, we
see that  $H^0=\bigoplus\limits_{G}U(L)$ is a coreflexive
coalgebra too (see Proposition \ref{p.algr}).

We show that there is such a reflexivity for Hopf algebras with
nonzero integral, which are a generalization of algebras of
functions on compact groups (i.e. compact quantum groups). Namely,
we observe that such Hopf algebras are coreflexive in all the
possible meanings, i.e. both in the sense of Lin, Radford and
Taft, provided that the set of simple subcoalgebras satisfy a
certain non-restrictive condition (which is always true over
infinite fields), as well as with respect to our new reflexivity
notion.


We work over a field $\KK$. By algebra we mean an associative
algebra, not necessarily with unit. For such an algebra $A$ we
denote by $A-Mod$ the category of left $A$-modules (with no unital
condition, not even in the case where $A$ has a unit). If $A$ is
an algebra with unit, we denote by $_A{\mathcal M}$ the category
of unital left $A$-modules. By coalgebra we mean a coassociative
coalgebra which does not necessarily have a counit. If $C$ is such
a coalgebra, with comultiplication $\delta$, then a right
$C$-comodule is a space $M$ together a linear map
$\rho:M\rightarrow M\otimes C$ such that $(\rho \otimes
I)\rho=(I\otimes \delta)\rho$. The category of right $C$-comodules
is denoted by $Comod-C$. If $C$ has a counit, then the category of
counital $C$-comodules (i.e. satisfying the counit property) is
denoted by $\mathcal{M}^C$. For basic terminology and notation
about coalgebras and comodules we refer to \cite{DNR}, \cite{mo},
and \cite{Sw}.

\section{Non-counital coalgebras}

 If $A$ is an algebra, then the unitalization of $A$ is the
 algebra $A^1$
 with identity, obtained by adjoining to $A$ a unit element $u$, i.e.
 $A^1=A\oplus \KK u$, with multiplication defined by $(a+\alpha
 u)(b+\beta u)=ab+\alpha b+\beta a +\alpha\beta u$ for any $a,b\in
 A$ and $\alpha,\beta\in \KK$. Then the inclusion map
 $i:A\rightarrow A^1$ is an algebra map satisfying the following universal property. If $B$
 is an algebra with unit, then any algebra map $f:A\rightarrow B$
 extends uniquely to a map $F:A^1\rightarrow B$ of algebras with
 unit. The categories $A-Mod$ and $_{A^1}{\mathcal M}$ are isomorphic.
 Indeed, a left $A$-module $M$ can be viewed as a unital left
 $A^1$-module by extending the action of $A$ with the condition
 that $u$ acts as identity on $M$, while a unital left $A^1$-module is
 a left $A$-module by restricting scalars via $i$. Moreover, the
 lattices $\mathcal{L}_{A-Mod}(M)$ and $\mathcal{L}_{_A\mathcal{M}}(M)$ of subobjects of
 $M$ in these two categories coincide. We also note that
 every finite dimensional algebra $A$
embeds in a matrix algebra. Indeed, if $dim(A)=n$, then
 $\pi:A\rightarrow {\rm End}(A^1)\cong M_{n+1}(\KK)$, $\pi (a)(z)=az$ for any $a\in A$, $z\in A^1$,
 is an injective morphism of algebras.

We show that there are dual constructions and results for
coalgebras.

\begin{proposition}\label{l.2}
(i) Every coalgebra is a quotient of a coalgebra with counit.\\
(ii) Every finite dimensional coalgebra $(C,\delta)$ is a quotient
of a comatrix coalgebra. Consequently, there are elements
$(c_{ij})_{i,j=1,\dots,n}$ which span $C$ such that
$\delta(c_{ij})=\sum\limits_{k=1}^nc_{ik}\otimes c_{kj}$.
\end{proposition}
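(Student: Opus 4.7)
My approach is to dualize the unitalization construction. Given a (possibly non-counital) coalgebra $(C,\delta)$, I form $C^1 = C\oplus \KK g$ and define a comultiplication by $\Delta(g)=g\otimes g$ and $\Delta(c)=\delta(c)+c\otimes g+g\otimes c$ for $c\in C$, together with counit $\varepsilon(g)=1$ and $\varepsilon|_C=0$. The counit axiom is immediate on both $g$ and $c$. For coassociativity, expanding $(\Delta\otimes I)\Delta(c)$ and $(I\otimes\Delta)\Delta(c)$ produces seven summands on each side: the six terms involving $g$ match pairwise by inspection, and the single remaining identity $\sum\delta(c_{(1)})\otimes c_{(2)}=\sum c_{(1)}\otimes\delta(c_{(2)})$ is precisely coassociativity of $\delta$ on $C$. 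The projection $\pi:C^1\to C$ sending $g\mapsto 0$ and fixing $C$ is then a surjective coalgebra map, exhibiting $C$ as a quotient of the counital coalgebra $C^1$.

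\textbf{Proof plan for (ii).} Here I dualize the embedding recalled just above the proposition, applied to the dual algebra. For $C$ finite dimensional of dimension $n$, the space $C^*$ is a finite dimensional (non-unital) algebra of the same dimension, so by the cited fact $C^*$ embeds as a subalgebra of $M_{n+1}(\KK)$ via left multiplication on $(C^*)^1$. Dualizing this injection in finite dimensions yields a surjective coalgebra map $\pi:M_{n+1}(\KK)^*\twoheadrightarrow C^{**}\cong C$. Since $M_{n+1}(\KK)^*$ is canonically the $(n+1)\times(n+1)$ comatrix coalgebra, with dual basis $\{e_{ij}\}$ satisfying $\delta(e_{ij})=\sum_k e_{ik}\otimes e_{kj}$, the elements $c_{ij}=\pi(e_{ij})$ for $i,j=1,\dots,n+1$ span $C$ and inherit the required comultiplication formula, proving the ``consequently'' part.

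\textbf{Main obstacle.} The only nontrivial bookkeeping is the coassociativity check in (i), which is routine but requires care in matching the seven summands on each side; once this is dispatched, both the universality-style construction of $C^1$ and the dualization in (ii) are formal. The key observation underlying (ii) is that the finite dimensional duality between algebras and coalgebras sends injective algebra maps to surjective coalgebra maps without ever invoking units or counits, so the argument runs cleanly in the non-unital and non-counital setting developed here.
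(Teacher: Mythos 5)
Your proposal is correct and follows essentially the same route as the paper: part (i) is the identical counitalization $C^1=C\oplus\KK e$ with the same comultiplication and counit, and part (ii) likewise embeds the finite dimensional dual algebra $C^*$ into a matrix algebra via left multiplication on its unitalization and dualizes to get a surjection from a comatrix coalgebra. The bookkeeping you flag (the seven-term coassociativity check and the fact that duality in finite dimensions turns injective algebra maps into surjective coalgebra maps without units or counits) is exactly what the paper leaves as routine verification.
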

\begin{proof}
(i) Let $(C,\delta)$ be a coalgebra, and write $\delta (c)=\sum
c_1\otimes c_2$. Let $C^1=C\oplus \KK e$, the direct sum of $C$
and a 1-dimensional space. Then it is easy to check that $C^1$ has
a structure of a coalgebra with counit with comultiplication
$\Delta$  defined by
$$\Delta (c+\alpha
e)=c\otimes e+e\otimes c+\alpha e\otimes e+\sum c_1\otimes c_2$$
and counit $\epsilon$, $\epsilon (c+\alpha e)=\alpha$ for any
$c\in C, \alpha \in \KK$. Moreover, the map
$\pi:C^1\rightarrow C$ defined by $\pi (c+\alpha e)=c$, is a surjective morphism of coalgebras.\\
(ii) The dual algebra $C^*$ embeds in a matrix algebra $M_n(\KK)$,
and then $C$ is a quotient of the matrix coalgebra $M_n^c(\KK)$.
For the last part, if $\theta:M_n^c(\KK)\rightarrow C$ is a
surjective morphism of coalgebras and $e_{ij}$ is a comatrix basis
of $M_n^c(\KK)$, then it suffices to take $c_{ij}=\theta(e_{ij})$.
\end{proof}

The following describes basic properties of the construction
$C\mapsto C^1$, the "counitalization" of a coalgebra.

\begin{proposition}\label{l.3}
Let $(C,\delta)$ be a coalgebra, and let $(C^1,\Delta, \epsilon)$
be the coalgebra with counit constructed in Proposition \ref{l.2},
with the coalgebra map $\pi:C^1\rightarrow C$. The following
assertions are true.\\
(i) For any coalgebra with counit $D$ and any coalgebra morphism
$f:D\rightarrow C$, there exists a unique morphism of coalgebras
with counit $\overline{f}:D\rightarrow C^1$ such that
$\pi\overline{f}=f$. Thus the
counitalization functor $(-)^1$ is a right adjoint to the forgetful functor from the category of counital coalgebras to the category of coalgebras.\\
(ii) The category $Comod-C$ of right $C$-comodules is isomorphic
to the category  $\mathcal{M}^{C^1}$-comodules via the
corestriction of scalars functor associated to $\pi$. Moreover, if
$M$ is a right $C$-comodule, then the lattices
$\mathcal{L}_{Comod-C}(M)$ and
$\mathcal{L}_{\mathcal{M}^{C^1}}(M)$ of subobjects of
 $M$ in these two categories coincide.
\end{proposition}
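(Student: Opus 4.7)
The plan for (i) is to use the direct sum decomposition $C^1=C\oplus \KK e$ to produce the lift explicitly. Given a counital coalgebra $(D,\delta_D,\epsilon_D)$ and a coalgebra map $f:D\to C$, I would set
\[
\overline{f}(d)=f(d)+\epsilon_D(d)\,e.
\]
Uniqueness is immediate: the requirement $\pi\overline{f}=f$ pins down the $C$-component of $\overline{f}(d)$, and the requirement that $\overline{f}$ preserve counits forces $\epsilon\circ\overline{f}=\epsilon_D$, which pins down the coefficient of $e$. For existence, I would verify $\Delta\overline{f}=(\overline{f}\otimes\overline{f})\delta_D$ by expanding both sides with the explicit formula for $\Delta$. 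On the left one finds the terms $f(d)\otimes e$, $e\otimes f(d)$, $\epsilon_D(d)\,e\otimes e$, and $(f\otimes f)\delta_D(d)$; on the right the same four terms are reproduced via the counit identities $\sum\epsilon_D(d_1)d_2=d=\sum d_1\epsilon_D(d_2)$ and $\sum\epsilon_D(d_1)\epsilon_D(d_2)=\epsilon_D(d)$. The adjunction statement is then a direct repackaging of this universal property.

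For (ii), I would build an explicit inverse to the corestriction functor associated with $\pi$. Given $(M,\rho)\in Comod\text{-}C$ with $\rho(m)=\sum m_0\otimes m_1$, set
\[
\rho^1:M\to M\otimes C^1,\qquad \rho^1(m)=\rho(m)+m\otimes e.
\]
The counit property $(I\otimes\epsilon)\rho^1=I_M$ is immediate, since $\epsilon$ vanishes on $C$ and sends $e$ to $1$. For coassociativity, I would expand $\Delta$ on the $C$- and $e$-components using $\Delta(c)=c\otimes e+e\otimes c+\delta(c)$ and $\Delta(e)=e\otimes e$, and collect the contributions in $M\otimes C^1\otimes C^1$; the triple tensors involving $e$ match termwise on the two sides, and the all-$C$ summand reduces to coassociativity of $\rho$.

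Conversely, any counital $C^1$-coaction $\rho^1$ decomposes uniquely as $\rho^1(m)=\rho_C(m)+\mu(m)\otimes e$ with $\rho_C(m)\in M\otimes C$ and $\mu(m)\in M$, and the counit axiom forces $\mu(m)=m$. Hence $\rho_C=(I\otimes\pi)\rho^1$ is a $C$-coaction, and the two constructions are mutually inverse. Equality of the subobject lattices then comes for free: for $N\subseteq M$ the inclusion $\rho^1(N)\subseteq N\otimes C^1$ is equivalent to $\rho(N)\subseteq N\otimes C$, since $m\otimes e$ automatically lies in $N\otimes C^1$ whenever $m\in N$.

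The only point that is not purely formal is the coassociativity/comultiplicativity bookkeeping in both parts, where one must match the three summands of $\Delta(c)$ against the extra $e$-contributions produced by the extension. This is routine, and the real content is the simple observation that the direct sum decomposition $C^1=C\oplus \KK e$ together with the explicit counit already determines both the lift $\overline{f}$ and the extended coaction $\rho^1$ uniquely.
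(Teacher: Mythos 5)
Your proposal is correct and follows essentially the same route as the paper: the same explicit lift $\overline{f}(d)=f(d)+\epsilon_D(d)e$ for (i) and the same extended coaction $\rho^1(m)=\rho(m)+m\otimes e$ with its inverse obtained by corestriction along $\pi$ for (ii), with the lattice equality read off from the direct sum decomposition. You simply spell out the counit/coassociativity verifications that the paper leaves as routine checks.
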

\begin{proof}
(i) It is easy to check that the map $\overline{f}:D\rightarrow
C^1$ defined by $\overline{f} (d)=f(d)+\epsilon_D(d)e$ for any
$d\in D$ satisfies the required conditions.\\
(ii) If $M$ is a right $C$-comodule with comodule structure map
$\rho:M\rightarrow M\otimes C$, then $M$ can be viewed as a
counital right $C^1$-comodule with comodule structure map
$\rho^1:M\rightarrow M\otimes C^1$, $\rho^1(m)=\rho (m)+m\otimes
e$. By corestricting scalars for this $C^1$-comodule via $\pi$, we
obtain the initial $C$-comodule $M$.

Now if $M$ is a right $C^1$-comodule with comodule structure map
$m\mapsto \sum m_{[0]}\otimes m_{[1]}$, then it becomes a right
$C$-comodule via $\pi$, and further a right $C^1$-comodule with
comodule structure map $m\mapsto m\otimes e+\sum m_{[0]}\otimes
\pi(m_{[1]})$. If we write $m_{[1]}=m_{[1]}'+m_{[1]}'' e$, with
$m_{[1]}'\in C, m_{[1]}''\in \KK$, then $m_{[1]}'=\pi(m_{[1]})$
and $m=\sum m_{[1]}''m_{[0]}$ by the counit property. Therefore,
$m\otimes e+\sum m_{[0]}\otimes \pi(m_{[1]})=\sum
m_{[1]}''m_{[0]}\otimes e + \sum m_{[0]}\otimes m_{[1]}' =\sum
m_{[0]}\otimes m_{[1]}$, so we obtain the initial $C^1$-comodule
structure on $M$.

It is clear that the subobjects of $M$ in the two categories are
the same.
\end{proof}

\begin{corollary} \label{thfundcomodule}
Let $C$ be a coalgebra, and $(M,\rho)$ a right $C$-comodule. Then
the subcomodule generated by any element $m\in M$ is finite
dimensional.
\end{corollary}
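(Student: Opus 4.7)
The plan is to reduce immediately to the classical fundamental theorem of comodules for counital coalgebras by invoking the counitalization construction of Proposition \ref{l.3}. Since the statement concerns only a single element $m\in M$ and a subcomodule, essentially nothing new has to be proved once we have the right framework.

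First I would pass from the $C$-comodule $(M,\rho)$ to the corresponding counital $C^1$-comodule via the structure map $\rho^1(m)=\rho(m)+m\otimes e$, as in Proposition \ref{l.3}(ii). By the second part of that proposition, the lattices $\Ll_{\Comod\text{-}C}(M)$ and $\Ll_{\Mm^{C^1}}(M)$ coincide, so the subcomodule of $M$ generated by $m$ in $\Comod\text{-}C$ agrees, as a subspace, with the $C^1$-subcomodule of $M$ generated by $m$ in $\Mm^{C^1}$.

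Next I would apply the usual fundamental theorem of comodules (see, e.g., \cite{DNR} or \cite{Sw}) to the counital coalgebra $C^1$ and the element $m$ of the counital $C^1$-comodule $M$: the $C^1$-subcomodule generated by $m$ is finite dimensional. Combining this with the equality of subcomodule lattices above, the $C$-subcomodule of $M$ generated by $m$ is finite dimensional, as required.

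The only potential subtlety — and hence the main (very mild) obstacle — is making sure one is allowed to invoke the classical fundamental theorem here, i.e.\ checking that its statement and proof only use the counit property in the way that Proposition \ref{l.3}(ii) provides. Since the classical argument writes $\rho^1(m)=\sum m_i\otimes c_i^1$ with finitely many $c_i^1\in C^1$ and shows that $\sum \KK m_i$ is a subcomodule containing $m$ (using precisely the counit of $C^1$), this goes through verbatim, and no further work is required.
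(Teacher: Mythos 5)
Your proof is correct and follows exactly the paper's own argument: pass to the counital $C^1$-comodule via Proposition \ref{l.3}(ii), use the coincidence of the subobject lattices, and invoke the classical fundamental theorem of comodules for $C^1$. No differences worth noting.
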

\begin{proof}
The result is known for counital comodules over coalgebras with
counit (see \cite[Theorem 2.1.7]{DNR}). Now everything is clear
since
$\mathcal{L}_{Comod-C}(M)=\mathcal{L}_{\mathcal{M}^{C^1}}(M)$
shows that the $C$-subcomodule of $M$ generated by $m$ is the same
to the $C^1$-subcomodule of $M$ generated by $m$.
\end{proof}

\begin{proposition} \label{isoC*1}
Let $C$ be a coalgebra. Then $(C^*)^1\simeq (C^1)^*$ as algebras
with unit.
\end{proposition}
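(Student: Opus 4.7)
My plan is to write down an explicit isomorphism and check it on the decompositions $(C^*)^1 = C^* \oplus \KK u$ and $(C^1)^* = C^* \oplus \KK \epsilon$. The counit $\epsilon$ of $C^1$ vanishes on $C$ and sends $e$ to $1$, so as a vector space $(C^1)^*$ splits as the direct sum of $\KK\epsilon$ and the hyperplane $V$ of functionals vanishing on $\KK e$, and $V$ is canonically identified with $C^*$ via restriction (its inverse being extension by $0$ on $\KK e$). Using the explicit formula $\Delta(c + \alpha e) = c\otimes e + e\otimes c + \alpha e\otimes e + \sum c_1\otimes c_2$ from Proposition \ref{l.2}, I would compute convolution in $(C^1)^*$ directly. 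For $f,g\in C^*$ extended by zero and $c\in C$, three of the four terms in $\Delta(c)$ involve $e$ and hence kill one of $f,g$, giving $(f * g)(c) = \sum f(c_1)g(c_2)$, i.e., exactly the convolution product of $C^*$; and $(f*g)(e) = f(e)g(e) = 0$. Thus $V$ is a subalgebra of $(C^1)^*$ isomorphic to $C^*$ as non-unital algebras. A similar small calculation shows $\epsilon$ is a two-sided unit of $(C^1)^*$: for any $h \in (C^1)^*$, $(\epsilon * h)(c + \alpha e) = h(c+\alpha e)$ because only the terms containing $e$ on the left survive.

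With these facts in hand I would define
\[
\Phi : (C^*)^1 \longrightarrow (C^1)^*, \qquad \Phi(f + \alpha u) = \tilde f + \alpha \epsilon,
\]
where $\tilde f$ denotes the extension of $f \in C^*$ by $\tilde f(e)=0$. This is plainly a linear isomorphism by the direct sum decompositions above. To see it is an algebra map, one just expands
\[
\Phi(f+\alpha u) * \Phi(g+\beta u) = \tilde f * \tilde g + \alpha \epsilon * \tilde g + \beta \tilde f * \epsilon + \alpha\beta \epsilon * \epsilon,
\]
and uses the two computations already made: $\tilde f * \tilde g = \widetilde{fg}$ (convolution in $C^*$) and $\epsilon$ is the unit of $(C^1)^*$. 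The right-hand side becomes $\widetilde{fg} + \alpha \tilde g + \beta \tilde f + \alpha\beta \epsilon$, which is exactly $\Phi\bigl((f+\alpha u)(g+\beta u)\bigr)$ by the definition of the multiplication in $(C^*)^1$. Finally $\Phi(u)=\epsilon$, so $\Phi$ preserves units.

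There is no real obstacle beyond bookkeeping; the only mild subtlety is recognizing that the canonical counit of $C^1$ plays the double role of unit of $(C^1)^*$ and of the adjoined element $u$ on the other side, and that the four cross-terms coming from $\Delta$ on $C^1$ reproduce precisely the four cross-terms in the formula $(f+\alpha u)(g+\beta u) = fg + \alpha g + \beta f + \alpha\beta u$ defining the unitalization. Once these identifications are made, the verification is essentially a one-line match of coefficients.
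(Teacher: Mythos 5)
Your proof is correct, and your computations check out: with $\Delta(c+\alpha e)=c\otimes e+e\otimes c+\alpha e\otimes e+\sum c_1\otimes c_2$, the three terms involving $e$ do vanish against functionals extended by zero, so $\tilde f * \tilde g=\widetilde{fg}$, and $\epsilon$ is indeed the convolution unit of $(C^1)^*$; the linear decomposition $(C^1)^*=V\oplus\KK\epsilon$ with $V=\{h: h(e)=0\}$ is valid since $h=(h-h(e)\epsilon)+h(e)\epsilon$. However, your route is not the one the paper takes as its main argument. The paper instead shows that the pair $((C^1)^*,\pi^*)$, where $\pi:C^1\to C$ is the projection, satisfies the universal property of the unitalization of $C^*$: given a unital algebra $B$ and an algebra map $f:C^*\to B$, the extension $\overline f(\phi)=f(\phi_{|C})+\phi(e)1_B$ is the unique unital algebra map with $\overline f\pi^*=f$, whence $(C^*)^1\simeq(C^1)^*$ by uniqueness of objects satisfying a universal property. (The paper does remark, without details, that one can alternatively verify the isomorphism directly from the two constructions --- which is exactly what you carried out.) The universal-property argument buys canonicity and functoriality of the isomorphism for free and never needs the explicit formula for $\Delta$ on $C^1$; your direct verification is more elementary and self-contained, and makes visible the pleasant matching of the four cross-terms of $\Delta$ on $C^1$ with the four terms of the unitalized product $(f+\alpha u)(g+\beta u)=fg+\alpha g+\beta f+\alpha\beta u$.
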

\begin{proof}
Let $\pi:C^1\rightarrow C$ be the natural projection and
$\pi^*:C^*\rightarrow (C^1)^*$ its dual map. We show that the pair
$((C^1)^*,\pi^*)$ satisfies the universal property of the
unitalization of the algebra $C^*$, and the desired isomorphism
follows. If $B$ is an algebra with unit $1_B$, and
$f:C^*\rightarrow B$ is an algebra map, let
 $\overline{f}:(C^1)^*\rightarrow B$ be defined by
$\overline{f}(\phi)=f(\phi_{|C})+\phi (e)1_B$ for any $\phi\in
(C^1)^*$, where $\phi_{|C}$ is the restriction of $\phi$ to $C$,
where we consider $C^1=C\oplus \KK e$. It is then easy to show
$\overline{f}$ is a map of algebras with identity satisfying
$\overline{f}\pi^*=f$, and that $\overline{f}$ is unique with
these properties.\\
Alternatively, one can show the isomorphism directly using the
construction of the unitalization and counitalization.
\end{proof}

\begin{corollary}
Let $C$ be a (not necessarily counital) coalgebra, and let $M$ be a right $C$-comodule. Then the lattices
$\mathcal{L}_{Comod-C}(M)$ and $\mathcal{L}_{C^*-Mod}(M)$
coincide.
\end{corollary}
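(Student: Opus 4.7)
The plan is to chain together the isomorphisms of categories (and corresponding lattice identifications) established in this section, reducing the non-counital statement to the well-known counital one. Explicitly, I want to assemble the following sequence of identifications of subobject lattices:
\[
\mathcal{L}_{Comod-C}(M)
\;=\;\mathcal{L}_{\mathcal{M}^{C^1}}(M)
\;=\;\mathcal{L}_{(C^1)^*-\mathcal{M}}(M)
\;=\;\mathcal{L}_{(C^*)^1-\mathcal{M}}(M)
\;=\;\mathcal{L}_{C^*-Mod}(M).
\]

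First, Proposition \ref{l.3}(ii) gives the identification $\mathcal{L}_{Comod-C}(M)=\mathcal{L}_{\mathcal{M}^{C^1}}(M)$: the counitalization functor induces an isomorphism of categories (in fact of comodule structures on the same underlying space $M$) that preserves subobjects. Next, since $C^1$ is a coalgebra with counit, the classical result for counital coalgebras applies to $M$ viewed as an object in $\mathcal{M}^{C^1}$: a subspace of $M$ is a $C^1$-subcomodule if and only if it is a $(C^1)^*$-submodule, where the latter module structure is the standard rational action $\phi\cdot m=\sum \phi(m_{[1]})m_{[0]}$. This yields $\mathcal{L}_{\mathcal{M}^{C^1}}(M)=\mathcal{L}_{(C^1)^*-\mathcal{M}}(M)$.

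Then, Proposition \ref{isoC*1} gives an isomorphism $(C^1)^*\simeq (C^*)^1$ of unital algebras, so the unital module categories and their lattices coincide, giving the third equality. Finally, the remarks opening Section 2 observe that for any algebra $A$ the categories $A-Mod$ and ${}_{A^1}\mathcal{M}$ are isomorphic via extending the action so that the adjoined unit acts as the identity, and that the lattices of subobjects in these two categories coincide; applied to $A=C^*$, this gives the last equality.

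The only real thing to verify is that this chain of identifications carries a given $C$-comodule structure on $M$ to the $C^*$-module structure obtained by dualizing in the usual way, i.e.\ $\phi\cdot m=\sum \phi(m_{[1]})m_{[0]}$ where $\rho(m)=\sum m_{[0]}\otimes m_{[1]}$; this is a straightforward unwinding of the definitions of $\rho^1$ (from the proof of Proposition \ref{l.3}(ii)) and of $\overline{f}$ (from the proof of Proposition \ref{isoC*1}), using that the counit $\epsilon$ of $C^1$ vanishes on $C$ and that the adjoined unit of $(C^*)^1$ corresponds to $\epsilon$ under the isomorphism $(C^*)^1\simeq (C^1)^*$. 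Once this compatibility is noted, no step presents any genuine obstacle; the content of the corollary is really just that counitalization on the coalgebra side matches unitalization on the dual algebra side, and each passage preserves the lattice of subspaces stable under the respective action/coaction.
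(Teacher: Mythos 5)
Your proposal is correct and follows exactly the paper's own proof: the same chain of four lattice identifications (counitalization, passage to the dual algebra, the isomorphism $(C^1)^*\simeq (C^*)^1$, and unitalization of $C^*$), together with the check that the composite functor is the usual one $Comod-C\rightarrow C^*-Mod$. Nothing to add.
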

\begin{proof}
We consider the functors
$$Comod-C\longrightarrow \mathcal{M}^{C^1}\longrightarrow
_{(C^1)^*}\mathcal{M}\longrightarrow
_{(C^*)^1}\mathcal{M}\longrightarrow _{C^*-Mod}\mathcal{M}$$ where
the first functor is the isomorphism of categories from
Proposition \ref{l.3} (ii), the second functor is the usual one
that regards comodules as modules over the dual algebra, the third
functor is the isomorphism of categories associated to the
isomorphism of algebras from Proposition \ref{isoC*1}, and the
fourth functor is the isomorphism of categories explained at the
beginning of this section for an arbitrary algebra. It is easy to
check that the composition of these functors is just the usual
functor $Comod-C\longrightarrow _{C^*-Mod}\mathcal{M}$. It follows
that
$$\mathcal{L}_{Comod-C}(M)=\mathcal{L}_{\mathcal{M}^{C^1}}(M)=\mathcal{L}_{_{(C^1)^*}\mathcal{M}}(M)
=\mathcal{L}_{_{(C^*)^1}\mathcal{M}}(M)=\mathcal{L}_{C^*-Mod}(M)$$

\end{proof}

Next we note that the result stating that the category of right
$C$-comodules is isomorphic to the category of rational left
$C^*$-modules holds in the non-counital case, too. The proof is as
in the case where $C$ has counit, see for example \cite[Theorem
2.2.5]{DNR}.

Now we can extend the fundamental theorem of coalgebras to the
non-counital case.

\begin{proposition}\label{th.fco}
Let $C$ be a coalgebra (not necessarily with counit). Then the
subcoalgebra generated by an element $c\in C$ is finite
dimensional.
\end{proposition}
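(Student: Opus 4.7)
My plan is to leverage the counitalization construction and reduce to the classical fundamental theorem for counital coalgebras, as already used in the proof of Corollary \ref{thfundcomodule}.

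First, I would form the counital coalgebra $C^1$ from Proposition \ref{l.2}, together with the surjective coalgebra morphism $\pi:C^1\to C$. The element $c\in C$ can be regarded as an element of $C^1$ via the inclusion $C\hookrightarrow C^1=C\oplus \KK e$, and by construction $\pi(c)=c$.

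Next, I would apply the fundamental theorem of coalgebras in its classical form (e.g.\ \cite[Theorem 2.1.7]{DNR}) to the counital coalgebra $C^1$ and the element $c\in C^1$: this yields a finite dimensional subcoalgebra $D\subseteq C^1$ with $c\in D$. Then $\pi(D)$ is a subspace of $C$, and because $\pi$ is a morphism of coalgebras, $\pi(D)$ is a subcoalgebra of $C$; moreover $\dim \pi(D)\le \dim D<\infty$, and $c=\pi(c)\in \pi(D)$.

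Finally, since the subcoalgebra of $C$ generated by $c$ is by definition the intersection of all subcoalgebras containing $c$, it is contained in the finite dimensional subcoalgebra $\pi(D)$, hence it is itself finite dimensional. There is no real obstacle here beyond checking that the counital fundamental theorem is applicable in $C^1$ and that the image of a subcoalgebra under a coalgebra morphism is a subcoalgebra, both of which are standard; the whole point is that the counitalization transfers the classical statement directly to the non-counital setting.
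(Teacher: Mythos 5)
Your argument is correct, but it takes a different route from the paper. The paper proves this proposition by observing that a subcoalgebra of $C$ is the same thing as a sub-bicomodule of the regular bicomodule $C$, viewing $C$-bicomodules as right comodules over $C\otimes C^{op}$, and then invoking Corollary \ref{thfundcomodule} (the finiteness theorem for comodules over a non-counital coalgebra, itself established via counitalization). You instead apply the counitalization directly at the level of the coalgebra: lift $c$ to $C^1$, use the classical fundamental theorem of coalgebras there to find a finite dimensional subcoalgebra $D\ni c$, and push $D$ down along the coalgebra surjection $\pi$. Both routes ultimately rest on the construction $C\mapsto C^1$; yours is more elementary in that it bypasses the bicomodule/$C\otimes C^{op}$ identification entirely and needs only the easy facts that $\pi$ is a coalgebra map and that the image of a subcoalgebra under a coalgebra map is a subcoalgebra --- which holds verbatim in the non-counital setting, since a subcoalgebra is just a subspace $D$ with $\Delta(D)\subseteq D\otimes D$, so $\delta(\pi(D))=(\pi\otimes\pi)\Delta(D)\subseteq\pi(D)\otimes\pi(D)$. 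The paper's route has the advantage of reusing the already established comodule statement and of realizing the subcoalgebra generated by $c$ as a sub(bi)comodule, a viewpoint that is useful elsewhere in the paper. One trivial remark: the result you want to cite from \cite{DNR} is the fundamental theorem for coalgebras rather than Theorem 2.1.7, which is the comodule finiteness statement; this does not affect the argument.
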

\begin{proof}
A subcoalgebra of $C$ is the same as a sub-bicomodule of $C$.
Regarding $C$-bicomodules as $C\otimes C^{op}$-right comodules,
the statement follows from Corollary \ref{thfundcomodule}.
\end{proof}

\section{The finite dual of a non-unital algebra}\label{sectdualfinit}

 If $A$ is an algebra with
identity, then $A^0$ is the subspace of $A^*$ consisting of all
elements $f$ with the property that $ker(f)$ contains a two-sided
ideal of $A$ of finite codimension. This property of $f$ is
equivalent to the existence of two finite families $(g_j)_j,
(h_j)_j\in A^*$ such that $f(ab)=\sum\limits_jg_j(a)h_j(b)$ for
any $a,b\in A$. For such an $f$, the families $(g_j)_j$ and
$(h_j)_j$ can be chosen in $A^0$, and the mapping $f\mapsto
\sum\limits_jg_j\otimes h_j$ defines a coassociative
comultiplication on $A^0$. Thus $A^0$ is a coalgebra with counit
$f\mapsto f(1_A)$, called the finite dual of $A$.

In this section, we extend the construction of the finite dual of
an algebra to the non-unital case. Let $A$ be an arbitrary
algebra, $A^1$ its unitalization, and $i:A\rightarrow A^1$ the
inclusion map. Let $i^*:(A^1)^*\rightarrow A^*$ be the dual map.
We consider the finite dual $(A^1)^0$ of $A^1$, which has a
structure of a coalgebra with counit as above, with
comultiplication denoted by $\Delta$.

\begin{proposition} \label{constrdelta}
$i^*((A^1)^0)$ has a unique coalgebra structure making the
restriction of $i^*$ to $(A^1)^0$ a coalgebra map.
\end{proposition}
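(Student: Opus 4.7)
The plan is to reduce the statement to showing that $\ker(i^*|_{(A^1)^0})$ is a subcoalgebra of $(A^1)^0$, which will take care of both uniqueness and existence at once. Writing $\pi := i^*|_{(A^1)^0}$, viewed as a linear surjection $(A^1)^0 \to i^*((A^1)^0)$, the requirement that $\pi$ be a coalgebra map forces any candidate comultiplication $\bar\Delta$ on $i^*((A^1)^0)$ to satisfy $\bar\Delta(\pi(f)) = (\pi \otimes \pi)(\Delta(f))$ for every $f \in (A^1)^0$, and the surjectivity of $\pi$ immediately yields uniqueness. For existence, I only need to verify that $(\pi \otimes \pi) \circ \Delta$ descends to $i^*((A^1)^0)$, i.e.\ vanishes on $K := \ker \pi$; the cleanest route is to prove the stronger statement $\Delta(K) \subseteq K \otimes K$.

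Next I would identify $K$ explicitly. Since $A^1 = A \oplus \KK u$ as vector spaces, a functional $f \in (A^1)^*$ restricts to zero on $A$ if and only if it is a scalar multiple of the linear functional $u^* \in (A^1)^*$ defined by $u^*(a + \alpha u) = \alpha$ for $a \in A$ and $\alpha \in \KK$. Because $\ker u^* = A$ is a two-sided ideal of $A^1$ of codimension one, $u^*$ lies in $(A^1)^0$, and therefore $K = \KK u^*$.

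The core computation is then $\Delta(u^*)$. For $a = a_0 + \alpha u$ and $b = b_0 + \beta u$ in $A^1$, the product $ab = a_0 b_0 + \alpha b_0 + \beta a_0 + \alpha\beta u$ has $u$-component $\alpha\beta = u^*(a) u^*(b)$, so $u^*(xy) = u^*(x) u^*(y)$ for all $x, y \in A^1$. This identifies $\Delta(u^*) = u^* \otimes u^*$, which lies in $K \otimes K$; hence $K$ is a subcoalgebra of $(A^1)^0$ and in particular $(\pi \otimes \pi)(\Delta(u^*)) = 0$. Thus $\bar\Delta$ is well-defined on $i^*((A^1)^0)$, and coassociativity transfers from that of $\Delta$ through the surjectivity of $\pi \otimes \pi \otimes \pi$.

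I do not foresee a serious obstacle in this argument; the only subtle point worth highlighting is that $K = \KK u^*$ is not a coideal of the counital coalgebra $(A^1)^0$, since the counit $\epsilon : f \mapsto f(u)$ sends $u^*$ to $1$. Consequently $i^*((A^1)^0)$ does \emph{not} inherit a counit compatible with $\pi$, which is perfectly consistent with the non-counital conventions established in Section~2 and explains why the proposition only asserts the existence of a coalgebra structure rather than a counital one.
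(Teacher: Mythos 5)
Your proof is correct, and it follows the same overall scaffolding as the paper's (descend $(i^*\otimes i^*)\circ\Delta$ through the surjection $\pi$, get uniqueness and coassociativity for free from the surjectivity of $\pi$ and $\pi\otimes\pi\otimes\pi$), but the key verification is done differently. The paper never identifies the kernel: for an arbitrary $f\in(A^1)^0$ with $f_{|A}=0$ it writes $\Delta(f)=\sum_j g_j\otimes h_j$ and evaluates $(i^*\otimes i^*)\Delta(f)$ against $a\otimes b$ with $a,b\in A$ via the canonical embedding $\theta:A^*\otimes A^*\to(A\otimes A)^*$, getting $\sum_j g_j(a)h_j(b)=f(ab)=0$ because $ab\in A$; injectivity of $\theta$ then kills $(i^*\otimes i^*)\Delta(f)$. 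You instead pin down $K=\ker\pi$ as the line $\KK u^*$ spanned by the algebra character $u^*:a+\alpha u\mapsto\alpha$ (which does lie in $(A^1)^0$ since $\ker u^*=A$ is a cofinite two-sided ideal), and use that $u^*$ is grouplike, so $\Delta(K)\subseteq K\otimes K$. Both arguments are sound; the paper's is slightly more economical and would survive even if the kernel were larger, while yours buys extra structural information --- it exhibits $K$ as a one-dimensional subcoalgebra spanned by a grouplike, which explains precisely why the counit of $(A^1)^0$ does not descend and foreshadows the later isomorphism $(A^1)^0\simeq(A^0)^1$, where the adjoined element $e$ of the counitalization is exactly such a grouplike. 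Your closing remark on the failure of the counit to descend is accurate and consistent with the non-counital conventions of Section~2.
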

\begin{proof}
We show that there exists a unique linear map
$\delta:i^*((A^1)^0)\rightarrow i^*((A^1)^0)\otimes i^*((A^1)^0)$
making the following diagram commutative.


$$\xymatrix{
(A^1)^0 \ar[r]^{\Delta}\ar[d]_{i^*} & (A^1)^0\otimes (A^1)^0\ar[d]^{i^*\otimes i^*} \\
i^*((A^1)^0) \ar[r]^>>>>{\delta} & i^*((A^1)^0)\otimes
i^*((A^1)^0) }$$


Note that for simplicity we kept the notation $i^*$ for the
restriction of this map to $(A^1)^0$. Indeed, let $f\in (A^1)^0$
such that $i^*(f)=0$, i.e. the restriction $f_{|A}$ of $f$ to $A$
is zero. Let $\Delta(f)=\sum\limits_jg_j\otimes h_j$, thus
$f(zt)=\sum\limits_jg_j(z)h_j(t)$ for any $z,t\in A^1$. Then
$(i^*\otimes i^*)\Delta(f)=\sum\limits_jg_{j|A}\otimes h_{j|A}$.
If $\theta:A^*\otimes A^*\rightarrow (A\otimes A)^*$ is the
natural embedding, then $\theta (\sum\limits_jg_{j|A}\otimes
h_{j|A})(a\otimes b)=\sum\limits_jg_j(a)h_j(b)=f(ab)=0$. We obtain
that $(i^*\otimes i^*)\Delta(f)=0$, and so the existence and
uniqueness of $\delta$ follows.

Now we have that $$(\delta \otimes I)\delta i^*=(i^*\otimes
i^*\otimes i^*)(\Delta \otimes I)\Delta$$ and
$$(I\otimes \delta )\delta i^*=(i^*\otimes
i^*\otimes i^*)(I\otimes \Delta)\Delta$$ Since $\Delta$ is
coassociative and $i^*$ is an epimorphism, we must have $(\delta
\otimes I)\delta=(I\otimes \delta )\delta$.
\end{proof}

We can derive the following description of $i^*((A^1)^0)$.

\begin{proposition} \label{charfinitedual}
$i^*((A^1)^0)$ is the set of all elements $f\in A^*$ such that
$Ker(f)$ contains a two-sided ideal of $A$ of finite codimension.
\end{proposition}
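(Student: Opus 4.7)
The plan is to prove the two stated set inclusions separately, using the explicit splitting $A^1 = A \oplus \KK u$ to transfer two-sided ideals of finite codimension back and forth between $A$ and $A^1$.

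For the inclusion $i^*((A^1)^0) \subseteq \{f \in A^* : \Ker(f) \supseteq I \text{ some two-sided ideal of finite codim}\}$, I would start with $f = i^*(F) = F_{|A}$ for some $F \in (A^1)^0$. By definition of $(A^1)^0$, there is a two-sided ideal $J$ of $A^1$ of finite codimension in $A^1$ with $J \subseteq \Ker(F)$. Then $I := J \cap A$ is a two-sided ideal of $A$ (it is the intersection of the ideal $J$ of $A^1$ with the subalgebra $A$), it is contained in $\Ker(f)$, and the canonical map $A/I \to A^1/J$ induced by $i$ is injective, so $\dim A/I \le \dim A^1/J < \infty$.

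For the reverse inclusion, suppose $f \in A^*$ and $\Ker(f)$ contains a two-sided ideal $I$ of $A$ of finite codimension in $A$. Define $F \in (A^1)^*$ by $F(a + \alpha u) = f(a)$ (equivalently, extend by $0$ on $\KK u$). Viewing $I$ as a subspace of $A^1$ via the inclusion $i$, I would check that $I$ is a two-sided ideal of $A^1$: for $x \in I$ and $a + \alpha u \in A^1$ one has $(a + \alpha u) x = ax + \alpha x \in I$ (and symmetrically on the right), since $I$ is already a two-sided ideal of $A$ and $u$ acts as the identity. Moreover $A^1 / I \cong (A/I) \oplus \KK \bar{u}$ is finite dimensional, so $I$ has finite codimension in $A^1$. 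Since $F$ vanishes on $I$, we get $F \in (A^1)^0$, and by construction $i^*(F) = f$.

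There is no serious obstacle; the only point requiring slight care is observing that an ideal of $A$ automatically remains an ideal after adjoining the unit $u$, which holds precisely because the new multiplications are just scalar multiples of the existing action. Together, the two inclusions give the desired characterization.
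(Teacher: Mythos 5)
Your proof is correct and follows essentially the same route as the paper's: intersect the given ideal of $A^1$ with $A$ for one inclusion, and for the other observe that an ideal of $A$ of finite codimension remains a two-sided ideal of finite codimension in $A^1$ and extend $f$ arbitrarily (you choose the extension by zero, the paper takes any linear extension). The extra details you supply (injectivity of $A/I \to A^1/J$, the explicit check that $u$ preserves $I$) are exactly the points the paper leaves implicit.
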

\begin{proof}
If $f\in i^*((A^1)^0)$, then $f=g_{|A}$ for some $g\in (A^1)^0$.
Let $J$ be a two-sided ideal of $A^1$ of finite codimension such
that $J\subseteq Ker(g)$. Then $Ker(f)$ contains $A\cap J$, a
two-sided ideal of $A$ of finite codimension.

Conversely, if $Ker(f)$ contains a two-sided ideal $I$ of $A$ of
finite codimension, let $g\in (A^1)^*$ be a linear map such that
$g_{|A}=f$. Then $I$ is also a two-sided ideal of $A^1$, it has
finite codimension in $A^1$, and $I\subseteq Ker(g)$. Thus $g\in
(A^1)^0$ and $f=i^*(g)\in i^*((A^1)^0)$.
\end{proof}

We denote $i^*((A^1)^0)$ by $A^0$, and we call the coalgebra
$(A^0,\delta)$ the finite dual of $A$. Proposition
\ref{charfinitedual} and the description of $\delta$ in
Proposition \ref{constrdelta} shows that there is no inconsistency
in the notation, since in the case where $A$ is an algebra with
identity, $i^*((A^1)^0)$ is just the usual finite dual of the
algebra with identity $A$.

If $\phi:A\rightarrow B$ is an algebra map, then
$\phi^*(B^0)\subseteq A^0$. Indeed, this follows from the fact
that the inverse image of a finite codimensional two-sided ideal
of $B$ through $\phi$ is a finite codimensional two-sided ideal of
$A$. We denote by $\phi^0:B^0\rightarrow A^0$ the map induced by
$\phi^*$. The mappings $A\mapsto A^0$ and $\phi\mapsto \phi^0$
define a contravariant functor $(-)^0:alg\rightarrow coalg$, where
$alg$ is the category of algebras, and $coalg$ is the category of
coalgebras. As in the (co)unital case (see for example
\cite[Theorem 1.5.22]{DNR}) one sees that $(-)^0$ is a left
adjoint for the "dual algebra" functor $(-)^*:coalg\rightarrow
alg$.

Next we give several characterizations of the finite dual $A^0$.
Some of their proofs carry over from the unital case in a
straightforward way, but some other need  more attention.

 We first introduce a notation. If $\eta:A\rightarrow \End(V)$
is a finite dimensional representation of $A$, let $v_1,\dots,v_n$
be a basis of $V$ and let $\phi:End(V)\simeq M_n(\KK)$ be the
algebra isomorphism associated to this basis. Let $\rho=\phi
\eta:A\rightarrow M_n(\KK)$ be the resulting matrix
representation, and $\rho_{ij}$ be the coefficient functions of
$\rho$. Thus $\rho(a)=(\rho_{ij}(a))_{i,j}$ and $\eta
(a)(v_j)=\sum_i\rho_{ij}(a)v_i$ for any $a\in A$ and $1\leq
i,j\leq n$. Let $R(A)\subset A^*$ be the span of all possible such
coefficient functions $\rho_{ij}$, for arbitrary representations
$\eta$ and arbitrary choice of basis $(v_i)_i$. This is called the
set of representative functions on $A$. In the unital case, this
is a classical notion, with roots in representations of compact
groups, and of algebraic groups. For such groups, it is well known
now that the space of representative functions is the same as the
representative
 functions on an apropriate (group) algebra, and, for compact and algegraic
 groups (or group schemes), it forms a Hopf algebra. Now we can give the
equivalent characterizations of $A^0$; the equivalent statements
of the following proposition are well known for unital algebras.

\begin{proposition}
Let $A$ be an algebra (not necessarily with unit), and let $f:A\rightarrow \KK$ be a linear map. The following are equivalent.\\
(i) $f\in A^0$.\\
(ii) $\ker(f)$ contains a left ideal of finite codimension.\\
(iii) $\ker(f)$ contains a right ideal of finite codimension.\\
(iv) $Af$ is finite dimensional.\\
(v) $fA$ is finite dimensional.\\
(vi) The bimodule generated by $f$, $\KK f+Af+fA+AfA$, is finite
dimensional.\\ (vii) There exist $g_i,h_i\in A^*, i=1,\dots,n$
such that $f(ab)=\sum\limits_{i=1}^ng_i(a)h_i(b)$ for any $a,b\in
A$. \\
(viii) $f\in R(A)$.
\end{proposition}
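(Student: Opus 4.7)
The plan is to prove the equivalences by a cycle of implications, reducing the essential non-unital content to the classical unital case applied to the unitalization $A^1$ via Proposition \ref{charfinitedual}, which identifies $A^0 = i^*((A^1)^0)$. Throughout I use the $A$-bimodule structure on $A^*$ given by $(a \cdot f)(x) = f(xa)$ and $(f \cdot a)(x) = f(ax)$.

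The direct implications are routine. (i) $\Rightarrow$ (ii), (iii), (vi) hold because a finite-codimension two-sided ideal $I \subseteq \ker f$ is both left and right, and the bimodule $\KK f + Af + fA + AfA$ sits inside $(A/I)^*$; (vi) $\Rightarrow$ (iv), (v) is obvious; and (iv) $\Leftrightarrow$ (vii) $\Leftrightarrow$ (v) is linear algebra, obtained by expanding $a \cdot f = \sum_i h_i(a) g_i$ in a basis $g_1,\dots,g_n$ of $Af$ to get $f(xa) = \sum_i g_i(x) h_i(a)$, and conversely. Finally (viii) $\Rightarrow$ (i) holds since a matrix coefficient of $\rho: A \to M_n(\KK)$ vanishes on the two-sided ideal $\ker \rho$, of codimension at most $n^2$.

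The remaining implications close the cycle back to (i) via the unitalization. Let $g \in (A^1)^*$ be the extension of $f$ defined by $g(a + \alpha u) = f(a)$. For (ii) $\Rightarrow$ (i): a finite-codimension left ideal $L \subseteq \ker f$ of $A$ is automatically a left ideal of $A^1$ of codimension $\dim(A/L)+1$ (since $A^1 L = AL + \KK u L \subseteq L$) sitting inside $\ker g$, so the classical unital result \cite[Theorem 1.5.22]{DNR} applied to $A^1$ gives $g \in (A^1)^0$, whence $f = i^*(g) \in A^0$; (iii) $\Rightarrow$ (i) is symmetric. For (iv) $\Rightarrow$ (i), a direct computation yields $(b \cdot g)(d + \delta u) = (c \cdot f + \beta f)(d) + \delta f(c)$ for $b = c + \beta u$, whence $A^1 g$ embeds into $(\KK f + Af) \oplus \KK$; so $Af$ finite-dimensional forces $A^1 g$ finite-dimensional and hence $g \in (A^1)^0$. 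Analogous estimates handle $gA^1$ and $A^1 g A^1$, giving (v), (vi), (vii) $\Rightarrow$ (i). Finally, for (i) $\Rightarrow$ (viii) I lift $f$ to $g \in (A^1)^0$ and use the unital theory to write $g = \sum_{i,j} \lambda_{ij} \sigma_{ij}$ for coefficient functions of a representation $\sigma: A^1 \to M_n(\KK)$; restricting $\sigma$ to $A$ then exhibits $f$ as a combination of coefficient functions of $\sigma|_A$, so $f \in R(A)$.

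The main subtlety is the module-theoretic transfer: conditions on $A^*$-orbits of $f$ do not pass transparently to $(A^1)^*$-orbits of $g$ since these live in different dual spaces and pick up extra contributions from the $u$-coordinate. The key observation is that these extra contributions add at most one dimension, so finite dimensionality is preserved and the reduction to the unital case goes through cleanly.
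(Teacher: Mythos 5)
Your proof is correct, but the reduction strategy is genuinely different from the paper's. The paper establishes (i)--(vii) by re-running the classical unital argument \emph{inside $A$ itself}, with the bookkeeping changes of replacing $Af$ by $Af+\KK f$, $fA$ by $fA+\KK f$, and $AfA$ by $AfA+Af+fA+\KK f$, and with one genuinely new step for (ii)$\Rightarrow$(i): to pass from a cofinite left ideal $I\subseteq\ker(f)$ to a cofinite two-sided ideal it forms the representation $\phi:A\rightarrow \End(A/I)$ and takes $I\cap \Ker\phi$; for (i)$\Rightarrow$(viii) it unitalizes the \emph{quotient}, working with $B=(A/I)^1$ and showing that the coefficient functions of the action of $A$ on $B$ span $(A/I)^*$. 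You instead unitalize $A$ once and for all: you extend $f$ to $g\in (A^1)^*$, verify that each hypothesis (cofinite left ideal, finite-dimensional orbit) transfers to $g$ at the cost of at most one extra dimension, invoke the unital theorem for $A^1$ as a black box, and pull the conclusion back along $i^*$ using the identification $A^0=i^*((A^1)^0)$. Your route is more modular --- nothing in the unital proof is reopened --- at the price of the transfer computations such as $(b\cdot g)(d+\delta u)=(c\cdot f+\beta f)(d)+\delta f(c)$ and the (correct) observations that $uL=L$ makes a cofinite left ideal $L$ of $A$ into one of $A^1$ and that a cofinite two-sided ideal of $A$ stays two-sided and cofinite in $A^1$. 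Both arguments are complete; the only quibble is that the unital equivalence you quote is really \cite[Lemma 9.1.1]{mo} (or the corresponding lemma in \cite{DNR}) rather than the adjunction statement you cite.
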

\begin{proof}
The equivalence of the first seven conditions can be proved by
adapting the arguments from the unital case, see for example the
proof of \cite[Lemma 9.1.1]{mo}. One change is that the
$A$-submodule generated by $f$ in the left $A$-module $A^*$ is
$Af+\KK f$, which change does not affects the finiteness of the
dimension. Similarly $fA$ must be replaced by $fA+\KK f$, and
$AfA$ by $AfA+Af+fA+\KK f$ in the proof. Another difference is at
(ii)$\Rightarrow$(i), where we have to show that a left ideal $I$
of finite codimension contains a two-sided ideal of finite
codimension. For this, we take the finite dimensional left
$A$-module $A/I$, and the associated representation
$\phi:A\rightarrow End(A/I)$, $\phi(a)(\hat{b})=\widehat{ab}$,
where the hat indicates the class modulo $I$. Then it is easy to
check that $I\cap Ker(\phi)$ is a two-sided ideal of $A$ of finite
codimension. \\
(viii)$\Rightarrow$ (i) It is enough to show that any coefficient
functions $\rho_{ij}$ associated to a representation of $A$ lies
in $A^0$. Since $\rho(ab)=\rho (a)\rho(b)$, we obtain that
$\rho_{ij}(ab)=\sum \limits_{r}\rho_{ir}(a)\rho_{rj}(b)$ for any
$a,b\in A$, so $\rho_{ij}\in A^0$. \\
(i)$\Rightarrow$ (viii) Let $I$ be a two-sided ideal contained in
$\ker(f)$. For a linear map $h:A\rightarrow \KK$ with $I\subseteq
\ker(h)$ write $\overline{h}$ for the induced map to $A/I$. Let
$B=(A/I)^1$ be the unitalization of $A/I$, and let
$u:A/I\hookrightarrow B$ be the inclusion map. Let
$v_1,\ldots,v_n$ be a basis of $B$. Consider $B$ as a
representation of $A$, via $\eta:A\rightarrow End(B)$, and let
$\rho:A\rightarrow M_n(\KK)$ be the corresponding matrix
representation associated to the basis $v_1,\ldots,v_n$. Since
$1_B=\sum\limits_{i}\alpha_jv_j$ for some scalars $\alpha_j$, we
see that $\hat{a}=\sum\limits_{j}\alpha_j\hat{a}v_j=
\sum\limits_{j,i}\alpha_j\overline{\rho}_{ij}(\hat{a})v_i=
\sum\limits_i(\sum\limits_j\alpha_j\overline{\rho}_{ij}(\hat{a}))v_i$
for any $a\in A$, where $\hat{a}$ denotes the class of $a$ in
$A/I$. Since $\hat{a}=\sum\limits_iv_i^*(\hat{a})v_i$, where
$(v_i^*)_i$ is the dual basis of $(v_i)_i$, we obtain that
$v_i^*(\hat{a})=\sum\limits_j\overline{\rho}_{ij}(\hat{a})$, so
$u^*(v_i^*)=\sum\limits_j\overline{\rho}_{ij}$. As $u^*$ is
surjective, $(u^*(v_i^*))_i$ spans $(A/I)^*$, and then
$(\overline{\rho}_{ij})_{i,j}$ also spans $(A/I)^*$. Therefore
$\overline{f}=\sum\limits_{i,j}\beta_{ij}\overline{\rho}_{ij}$ for
some scalars $\beta_{ij}$, and this shows that
$f=\sum\limits_{i,j}\beta_{ij}\rho_{ij}$, since $I\subseteq
\ker(f)$ and $I\subseteq \ker(\rho)$.
 Hence, $f\in R(A)$.
\end{proof}

Thus, we note that $A^0=R(A)$ for every not necessarily unital
algebra. By the previous proposition, the coalgebra structure of
$R(A)$ can also be obtained as
$R(A)=\lim\limits_{\stackrel{\rightarrow}{I}}(A/I)^*$, with the
limit being taken over all cofinite ideals $I$.

The following shows that the unitalization, the counitalization
and the finite dual functors are compatible in some sense.

\begin{proposition}
Let $A$ be an algebra. Then $(A^1)^0\simeq (A^0)^1$ as coalgebras
with counit.
\end{proposition}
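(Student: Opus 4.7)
The plan is to construct an explicit isomorphism $\Psi\colon (A^1)^0 \to (A^0)^1$ by the formula $\Psi(g) = i^*(g) + g(u)\,e$, and verify it is a bijective map of counital coalgebras. First I will check this is well defined: by construction $i^*(g) \in A^0$, so $\Psi(g) \in A^0 \oplus \KK e = (A^0)^1$. For bijectivity, I will produce the inverse directly. Any $f \in A^0$ lifts to $\widetilde{f} \in (A^1)^0$ by extending by zero on $\KK u$; this lift lies in $(A^1)^0$ because a finite codimensional two-sided ideal $I \subseteq \ker f$ of $A$ is automatically a finite codimensional two-sided ideal of $A^1$ contained in $\ker \widetilde{f}$ (use Proposition \ref{charfinitedual}). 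The functional $u^*\in(A^1)^*$ defined by $u^*(a+\alpha u)=\alpha$ lies in $(A^1)^0$ because $A$ itself is a codimension one two-sided ideal of $A^1$. Setting $\Phi(f + \alpha e) = \widetilde{f} + \alpha u^*$ gives the required inverse, since any $g \in (A^1)^*$ decomposes canonically as $g = \widetilde{g|_A} + g(u)\,u^*$.

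Preservation of the counit is immediate: $\epsilon_{(A^0)^1}(\Psi(g)) = g(u) = g(1_{A^1}) = \epsilon_{(A^1)^0}(g)$. The substance of the proof is comultiplication compatibility. Writing $\Delta_{(A^1)^0}(g) = \sum_j g^1_j \otimes g^2_j$, I will expand both sides:
\[
\Delta_{(A^0)^1}(\Psi(g)) \;=\; \delta(i^*(g)) \;+\; i^*(g)\otimes e \;+\; e\otimes i^*(g) \;+\; g(u)\,e\otimes e,
\]
while $(\Psi\otimes\Psi)\Delta_{(A^1)^0}(g)$ expands to four analogous terms. The top term matches via the defining identity $\delta\circ i^* = (i^*\otimes i^*)\circ\Delta$ from Proposition \ref{constrdelta}. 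For the cross terms I will use that $au = ua = a$ and $u\cdot u = u$ in $A^1$, which yields
\[
\sum_j g^2_j(u)\, i^*(g^1_j) \;=\; i^*(g), \qquad \sum_j g^1_j(u)\, i^*(g^2_j) \;=\; i^*(g), \qquad \sum_j g^1_j(u) g^2_j(u) \;=\; g(u),
\]
by evaluating $g(au)$, $g(ua)$, and $g(uu)$ respectively. All four terms then match.

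The main obstacle is nothing more than bookkeeping in this expansion; once the three identities above are in hand the computation is purely mechanical. As a conceptual check one may also argue via Yoneda: for any counital coalgebra $D$, the right adjointness of $(-)^1$ (Proposition \ref{l.3}(i)), the adjunction $(-)^0 \dashv (-)^*$ for non-unital algebras, and the universal property of the unitalization $A \hookrightarrow A^1$ combine to give natural bijections
\[
\Hom((D, (A^0)^1)) \;\cong\; \Hom(D, A^0) \;\cong\; \Hom(A, D^*) \;\cong\; \Hom(A^1, D^*) \;\cong\; \Hom(D, (A^1)^0),
\]
forcing the isomorphism. I will however present the direct construction, since it fits the spirit of the preceding propositions and makes the isomorphism explicit.
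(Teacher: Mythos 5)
Your proof is correct, but it takes a genuinely different route from the paper's. The paper argues abstractly: it shows that the pair $((A^1)^0,i^0)$ satisfies the universal property of the counitalization of $A^0$, by constructing, for every counital coalgebra $D$ and coalgebra map $f\colon D\to A^0$, the unique counital lift $\overline{f}(d)(a+\alpha u)=f(d)(a)+\alpha\varepsilon_D(d)$ and checking it is a coalgebra map; the isomorphism then follows from uniqueness of the object representing this functor. (Your parenthetical Yoneda chain is exactly this argument repackaged through the adjunctions, so you had the paper's proof in reserve.) Your main argument instead exhibits the isomorphism explicitly as $\Psi(g)=i^*(g)+g(u)e$ with inverse $f+\alpha e\mapsto\widetilde{f}+\alpha u^*$, and all the verifications are sound: $\widetilde{f}$ and $u^*$ land in $(A^1)^0$ for precisely the ideal-theoretic reasons you give (a cofinite two-sided ideal of $A$ remains one in $A^1$, and $A$ itself is a codimension-one ideal of $A^1$), the decomposition $g=\widetilde{g|_A}+g(u)u^*$ gives bijectivity, and the three identities obtained by evaluating $g$ on $au$, $ua$, $uu$ are exactly what is needed to match the four terms of $\Delta_{(A^0)^1}(\Psi(g))$ against $(\Psi\otimes\Psi)\Delta_{(A^1)^0}(g)$, the top term being handled by $\delta\circ i^*=(i^*\otimes i^*)\circ\Delta$. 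What your version buys is a concrete formula and the immediate compatibility $\pi\circ\Psi=i^0$ with the two projections onto $A^0$ (the paper gets this as a byproduct of the universal property); what the paper's version buys is that no direct computation with $\Delta_{(A^0)^1}$ is needed, the bookkeeping being absorbed into one verification against an arbitrary test object $D$. Either proof is complete and acceptable.
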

\begin{proof}
Let $i^0:(A^1)^0\rightarrow A^0$ be the image of the inclusion
$i:A\rightarrow A^1$ through the functor $(-)^0$. We show that
$((A^1)^0, i^0)$ satisfies the universal property of the
counitalization $((A^0)^1,\pi)$ of $A^0$; the desired isomorphism
(which moreover is compatible with $i^0$ and $\pi$) follows from
this.

Let $D$ be a coalgebra with counit, and let $f:D\rightarrow A^0$
be a map of coalgebras. Let $\overline{f}:D\rightarrow (A^1)^0$,
$\overline{f}(d)(a+\alpha u)=f(d)(a)+\alpha \varepsilon_D(d)$ for
any $d\in D, a\in A, \alpha \in \KK$. Since $f(d)\in A^0$, there
exists an ideal $I$ of $A$ of finite codimension with $I\subseteq
Ker(f(d))$. Then $I$ is also an ideal of finite codimension in
$A^1$ and $I\subseteq Ker(\overline{f}(d))$, and this shows that
indeed $\overline{f}(d)\in (A^1)^0$.

We now show that $\overline{f}$ is a coalgebra map. Let
$\gamma:(A^1)^0\otimes (A^1)^0\rightarrow (A^1\otimes A^1)^0$ be
the natural embedding. We have that

\bea (\gamma \Delta_{(A^1)^0}\overline{f}(d))((a+\alpha u)\otimes
(b+\beta u))&=&\sum \overline{f}(d)_1(a+\alpha
u)\overline{f}(d)_2(b+\beta u)\\
&=&\overline{f}(d)((a+\alpha u)(b+\beta u))\\
&=&\overline{f}(d)(ab+\alpha b+\beta a+\alpha\beta u)\\
&=&f(d)(ab+\alpha b+\beta a)+\alpha \beta \varepsilon_D(d) \eea
and

\bea (\gamma (\overline{f}\otimes
\overline{f})\Delta_D(d)((a+\alpha u)\otimes (b+\beta u))&=&\sum
\overline{f}(d_1)(a+\alpha
u)\overline{f}(d_2)(b+\beta u)\\
&=&\sum (f(d_1)(a)+\alpha \varepsilon_D(d_1))(f(d_2)(b)+\beta
\varepsilon_D(d_2))\\
&=&\beta f(d)(a)+\alpha f(d)(b)+\alpha\beta \varepsilon_D(d)+ \\
& & +\sum f(d_1)(a)f(d_2)(b)\eea

Since $\sum f(d_1)(a)f(d_2)(b)=\sum f(d)_1(a)f(d)_2(b)=f(d)(ab)$,
we obtain that $\gamma \Delta_{(A^1)^0}\overline{f}=\gamma
(\overline{f}\otimes \overline{f})\Delta_D$. Since $\gamma$ is
injective, this shows that $ \Delta_{(A^1)^0}\overline{f}=
(\overline{f}\otimes \overline{f})\Delta_D$, i.e. $\overline{f}$
is a coalgebra map.

It is clear that $i^0\overline{f}=f$. Moreover, if
$\tilde{f}:D\rightarrow (A^1)^0$ is another map of coalgebras with
counit such that $i^0\tilde{f}=f$, then for any $d\in D$ we have
that
$\tilde{f}(d)(u)=\varepsilon_{(A^1)^0}(\tilde{f}(d))=\varepsilon_D(d)$,
and
$\tilde{f}(d)(a)=\tilde{f}(d)(i(a))=(i^0\tilde{f}(d))(a)=f(d)(a)$
for any $a\in A$. We obtain that $\tilde{f}(d)(a+\alpha
u)=f(d)(a)+\alpha\varepsilon_D(d)=\overline{f}(d)(a+\alpha u)$, so
$\tilde{f}=\overline{f}$.

\end{proof}

Now we give an extension of the result saying that locally finite
unital representations of an algebra $A$ with identity are just
corepresentations of the finite dual $A^0$, see \cite[Chapter 3,
page 126]{abe}. If $A$ is an arbitrary algebra, a module $M$ is
called locally finite if the submodule generated by any element is
finite dimensional. We denote by $Locfin\,A-Mod$ the full
subcategory of $A-Mod$ whose objects are the locally finite
modules.

\begin{proposition} \label{isocategories}
Let $A$ be an algebra. Then the categories $Locfin\,A-Mod$ and
$Comod-A^0$ are isomorphic. In the case where $A$ is unital, this
isomorphism of categories restricts to an isomorphism between the
category of locally finite unital left $A$-modules and the
category of counital right $A^0$-comodules.
\end{proposition}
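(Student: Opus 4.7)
The plan is to reduce to the classical unital–counital case by composing isomorphisms already established in the paper. First I would show that the chain of functors
\[
Locfin\,A-Mod \longrightarrow Locfin\,{}_{A^1}\mathcal{M} \longrightarrow \mathcal{M}^{(A^1)^0} \longrightarrow \mathcal{M}^{(A^0)^1} \longrightarrow Comod-A^0
\]
consists entirely of isomorphisms of categories. The first functor is the restriction of the isomorphism $A-Mod \cong {}_{A^1}\mathcal{M}$ recorded at the start of Section~2; since the $A$-submodules and the $A^1$-submodules of any $M$ coincide (the sublattice $\mathcal{L}_{A-Mod}(M) = \mathcal{L}_{{}_{A^1}\mathcal{M}}(M)$), local finiteness is preserved and reflected. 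The second is the classical correspondence between locally finite unital modules over a unital algebra and counital comodules over its finite dual, applied to the unital algebra $A^1$; see \cite[Chapter~3, p.~126]{abe}. The third is induced by the coalgebra isomorphism $(A^1)^0 \simeq (A^0)^1$ established in the immediately preceding proposition. The fourth is exactly the content of Proposition~\ref{l.3}(ii).

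For the refinement when $A$ is unital, I would unwind the composite explicitly. It sends a right $A^0$-comodule $(M,\rho)$ with $\rho(m) = \sum m_{[0]} \otimes m_{[1]}$ to the module $M$ with action $a\cdot m = \sum m_{[1]}(a)\,m_{[0]}$. The counit of $A^0$ is evaluation at $1_A$, so the counit axiom $m = \sum m_{[1]}(1_A)\,m_{[0]}$ asserts exactly that $1_A$ acts as the identity, i.e.\ that $M$ is a unital $A$-module. Conversely, the standard construction of a comodule structure on a locally finite unital $A$-module, via coefficient functions of finite-dimensional subrepresentations, manifestly produces counital $A^0$-comodules. Hence the main isomorphism restricts to the claimed correspondence between locally finite unital $A$-modules and counital $A^0$-comodules.

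The only thing requiring care is verifying that each functor in the chain preserves and reflects local finiteness (and, in the unital substatement, counitality). All the conceptual content has been packaged into the earlier propositions and the cited classical theorem, so what remains is essentially bookkeeping; the main potential pitfall is checking that local finiteness transfers correctly under the Proposition~\ref{l.3}(ii) identification, but this is immediate from the equality of the sublattices of subobjects recorded there, which guarantees that the finitely generated subobjects on the two sides agree.
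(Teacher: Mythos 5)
Your proof is correct, but it takes a genuinely different route from the paper's. The paper argues directly: given a right $A^0$-comodule it defines the action $am=\sum m_1(a)m_0$ and checks local finiteness, and conversely, for a locally finite module it picks a basis $m_1,\dots,m_n$ of $Am+\KK m$, defines coefficient functionals $a_i^*$, and verifies that $a_i^*\in A^0$ and that $m\mapsto\sum_i m_i\otimes a_i^*$ is a well-defined coaction independent of the chosen basis. You instead reduce to the classical unital--counital theorem of Abe by composing the four isomorphisms $Locfin\,A\hbox{-}Mod\cong Locfin\,{}_{A^1}\mathcal{M}\cong\mathcal{M}^{(A^1)^0}\cong\mathcal{M}^{(A^0)^1}\cong Comod\hbox{-}A^0$, each of which is already on record (the module--unitalization identification of Section~2, the cited classical result, the isomorphism $(A^1)^0\simeq(A^0)^1$, and Proposition~\ref{l.3}(ii)). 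Your approach buys economy: you never have to re-verify the comodule axioms or the basis-independence of the coefficient functionals, since all of that is absorbed into the classical theorem. What it costs is that the resulting isomorphism is a priori opaque, and since the explicit formula $am=\sum m_1(a)m_0$ is what gets used later (e.g., in the proposition on left coreflexive coalgebras), your unwinding of the composite in the second paragraph is not optional bookkeeping but an essential part of the argument --- fortunately you do carry it out, and it checks out: tracing an $A^0$-comodule through $\rho^1(m)=\rho(m)+m\otimes e$, the identification $g+\alpha e\mapsto(a+\beta u\mapsto g(a)+\alpha\beta)$, and the classical action gives $(a+\beta u)m=\sum m_1(a)m_0+\beta m$, whose restriction to $A$ is exactly the natural action. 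Your handling of the unital refinement via the counit $f\mapsto f(1_A)$ is also correct and essentially matches what the paper leaves as ``straightforward.''
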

\begin{proof}
Let $M$ be a right $A^0$-comodule, with comodule structure given
by $m\mapsto \sum m_0\otimes m_1$. Then $M$ is a left $A$-module
with action given by $am=\sum m_1(a)m_0$, and this is locally
finite, since the $A$-submodule generated by $m$ is $Am+\KK m$,
which is finite dimensional since it is contained in the span of
$m$ and all $m_0$'s.

Conversely, if $M$ is a locally finite $A$-module, let $m\in M$,
and let $m_1,\ldots,m_n$ be a basis of $Am+\KK m$, the
$A$-submodule generated by $m$. Define $a_1^*,\ldots,a_n^*\in A^*$
by $am=\sum \limits_{i=1,n}a_i^*(a)m_i$ for any $a\in A$. Then it
is easy to check that $a_i^*\in A^0$ for any $i$, that
$\sum\limits_im_i\otimes a_i^*$ does not depend on the choice of
the basis $m_1,\ldots,m_n$, and the mapping $m\mapsto
\sum\limits_im_i\otimes a_i^*$ defines a right $A^0$-comodule
structure on $M$.

These correspondences are compatible with morphisms and they
define an isomorphism of categories. The last part is
straightforward.
\end{proof}

Let us consider now a bialgebra $H$, in the non-unital
non-counital sense. This means that $H$ is an algebra (not
necessarily with unit) and a coalgebra (not necessary with
counit), such that the comultiplication is an algebra morphism. If
moreover $H$ is a counital coalgebra and the comultiplication is
an algebra morphism, then $H$ is called a counital coalgebra.
Similarly  one defines unital bialgebras.

\begin{proposition} \label{finitedualbialgebra}
Let $H$ be a bialgebra. Then $H^0$, the finite dual of the
underlying algebra structure, is a subalgebra of $H^*$. Moreover,
$H^0$ is a bialgebra. If $H$ is counital (respectively unital),
then $H^0$ is unital (respectively counital).
\end{proposition}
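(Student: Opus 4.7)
The plan is to adapt the classical unital--counital argument, using the characterization of $H^0$ from Proposition \ref{charfinitedual} (cofinite two-sided ideals) as the main tool for handling non-unitality, and dualizing the bialgebra axiom for the rest. I will treat in turn: closure of $H^0$ under the convolution product; compatibility of the comultiplication $\delta_{H^0}$ with this product; and finally the (co)unit statements.

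For the first assertion, given $f,g\in H^0$, I consider the convolution product $(f*g)(h)=(f\otimes g)\delta(h)$, which requires no (co)unit to make sense. Choose cofinite two-sided ideals $I,J\subseteq H$ with $I\subseteq\Ker f$ and $J\subseteq\Ker g$, and set $K:=\delta^{-1}(I\otimes H+H\otimes J)$. Because $I$ and $J$ are two-sided, $I\otimes H+H\otimes J$ is a two-sided ideal of the tensor-product algebra $H\otimes H$; since $\delta$ is an algebra map, $K$ is a two-sided ideal of $H$. Moreover $\delta$ induces an injection $H/K\hookrightarrow (H\otimes H)/(I\otimes H+H\otimes J)\simeq (H/I)\otimes(H/J)$, so $K$ is cofinite. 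Since $K\subseteq\Ker(f*g)$, Proposition \ref{charfinitedual} yields $f*g\in H^0$, proving that $H^0$ is a subalgebra of $H^*$.

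The bialgebra compatibility is then a direct dualization. Writing $\delta_{H^0}(f)=\sum f_{(1)}\otimes f_{(2)}$, characterized by $f(xy)=\sum f_{(1)}(x)f_{(2)}(y)$, the bialgebra identity $\delta(xy)=\sum x_1y_1\otimes x_2y_2$ in $H$ gives, for $f,g\in H^0$,
\[
(f*g)(xy)=\sum f(x_1y_1)g(x_2y_2)=\sum (f_{(1)}*g_{(1)})(x)\,(f_{(2)}*g_{(2)})(y),
\]
so $\delta_{H^0}(f*g)=\delta_{H^0}(f)*\delta_{H^0}(g)$ in $H^0\otimes H^0$ with its tensor-product algebra structure. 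Hence $\delta_{H^0}$ is an algebra morphism and $H^0$ becomes a bialgebra.

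For the (co)unit statements: if $H$ has counit $\varepsilon$, then $\varepsilon$ is an algebra map by definition of counital bialgebra, so $\Ker\varepsilon$ is a cofinite two-sided ideal and $\varepsilon\in H^0$; the counit axiom of $H$ gives $\varepsilon*f=f*\varepsilon=f$ for $f\in H^0$, so $\varepsilon$ is a unit of $H^0$. Dually, if $H$ has unit $1_H$, the functional $f\mapsto f(1_H)$ is a counit for $H^0$, since the unit axiom in $H$ translates, after evaluating $f\in H^0$, to the counit identities for $\delta_{H^0}$. The main obstacle, and the only place where non-unitality demands extra care, is the construction of the cofinite two-sided ideal $K$ in the first step; every other step is a formal dualization that goes through identically whether or not $H$ has a unit or counit.
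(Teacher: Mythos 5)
Your proof is correct and its core is exactly the paper's argument: the single computation $(fg)(ab)=\sum f_1(a_1)f_2(b_1)g_1(a_2)g_2(b_2)=\sum (f_1g_1)(a)(f_2g_2)(b)$ is the paper's entire proof, since by the characterization (vii) of $H^0$ (existence of finite families with $f(ab)=\sum g_i(a)h_i(b)$) this identity simultaneously certifies $fg\in H^0$ and identifies $\delta(fg)=\sum f_1g_1\otimes f_2g_2$. Your separate first step constructing the cofinite ideal $K=\delta^{-1}(I\otimes H+H\otimes J)$ is valid but redundant for this reason --- it is not where the non-unitality is absorbed (that happens in Section 3, in proving the equivalence of the characterizations of $A^0$) --- and your explicit treatment of the (co)unit claims fills in what the paper dismisses as obvious.
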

\begin{proof}
Let $f,g\in H^0$, and let $\delta (f)=\sum f_1\otimes f_2$,
$\delta(g)=\sum g_1\otimes g_2$, where $\delta$ is the
comultiplication of $H^0$. Then for any $a,b\in H$ we have that
$(fg)(ab)=\sum f(a_1b_1)g(a_2b_2)=\sum
f_1(a_1)f_2(b_1)g_1(a_2)g_2(b_2)=\sum (f_1g_1)(a)(f_2g_2)(b)$, and
this shows that $fg\in H^0$ and $\delta (fg)=\sum f_1g_1\otimes
f_2g_2$. Thus $H^0$ is a bialgebra. The last part is obvious.
\end{proof}

\begin{examples}
(1) Let $S$ be a semigroup. Then the semigroup algebra $\KK S$ is
a counital bialgebra.  In fact, since $(\KK S)^*\simeq Func
(S,\KK)$, the algebra of functions on $S$ (with values in $\KK$),
then $(\KK S)^0$ is just the set $R_{\KK}(S)$ of representative
functions, i.e. the set of all functions $f:S\rightarrow \KK$ for
which there exist finite families of functions $(u_i)_i$ and
$(v_i)_i$ on $S$ such that $f(xy)=\sum_iu_i(x)v_i(y)$ for any
$x,y\in S$. By Proposition \ref{finitedualbialgebra}, $R_{\KK}(S)$
is a unital bialgebra.

(2) Let $\Gamma$ be a quiver and let $\KK [\Gamma]$ be the
associated quiver algebra, which is unital if and only if $\Gamma$
has finitely many vertices. It seems to be a difficult problem to
describe $\KK [\Gamma]^0$ explicitly for an arbitrary $\Gamma$.
Indeed, consider the case of $\Gamma$ consisting of one vertex $v$
and $n$ arrows ($n$ loops at $v$). In this situation,
$\KK[\Gamma]\cong \KK<X_1,\dots,X_n>$, the noncommutative algebra
of polynomials in $n$ variables. Determining the finite dual of
this algebra would involve the clasifying in some way the cofinite
ideals of $\KK<X_1,\dots,X_n>$, or equivalently, the annihilators
of finite dimensional representations. Classification of finite
dimensional $\KK<X_1,\dots,X_n>$-modules is however a ``wild"
problem, and it is to be expected that describing the finite dual
of this non-commutative polynomial algebra should be of similar
dificulty. In fact, it is known that $\KK<X_1,\dots,X_n>^0$ is the
cofree coalgebra over a finite dimensional vector space of
dimension $n$ (see \cite[Theorem 2.4.2]{abe}). More generally, if
the quiver has oriented cycles, the representation theory will
bear similarities to that of $K<X_1,\dots, X_n>$. In particular,
if at least to different such cycles exist, the category of finite
dimensional representations is wild.

Under certain conditions or in particular cases the description is
known. For instance, it is proved in \cite[Theorem 3.3]{DIN1} that
if $\Gamma$ has no oriented cycles and there are only finitely
many arrows between any two vertices, then $\KK [\Gamma]^0$ is
isomorphic to the path coalgebra $\KK \Gamma$ associated to
$\Gamma$.  Another interesting situation is when $\Gamma$ has just
one vertex and just one arrow (a loop), in which case $\KK
[\Gamma]$ is just the polynomial algebra $\KK [X]$. In this case,
$\KK [\Gamma]^0$ is identified with the linearly recursive
functions on $\KK [X]$, see \cite[Example 9.1.7]{mo}. It can
precisely be described as

$$K[X]^0=\bigoplus\limits_{f{\rm\,irreducible}}[\lim\limits_{\stackrel{\longrightarrow}{n\in
\ZZ_{\geq 0}}}(K[X]/(f^n))^*]$$

(3) Let $(X,\leq)$ be a partially ordered set which is locally
finite, i.e., the set $\{z|x\leq z\leq y\}$ is finite for any
$x\leq y$ in $X$. Let $\KK X$ be the incidence coalgebra of $X$.
This is
 the vector space with basis $\{ e_{x,y}|x,y\in
X, x\leq y\}$, comultiplication  defined by
$\Delta(e_{x,y})=\sum_{x\leq z\leq y}e_{x,z}\otimes e_{z,y}$, and
counit defined by $\epsilon (e_{x,y})=\delta_{x,y}$ for any
$x,y\in X$ with $x\leq y$. The dual algebra of $\KK X$ is
isomorphic to the incidence algebra $IA(X)$, which is the space of
all functions $f:\{(x,y)|x,y\in X, x\leq y\}\rightarrow K$, with
multiplication given by $(fg)(x,y)=\sum_{x\leq z\leq
y}f(x,z)g(z,y)$ for any $f,g\in IA(X)$ and any $x,y\in X, x\leq
y$. Let  $FIA(X)$ be the set of all elements of $IA(X)$ of finite
support. Then $FIA(X)$ is a subalgebra of $IA(X)$, non-unital when
$X$ is infinite. It is proved in \cite[Theorem 4.2]{DIN1} that
$FIA(X)^0\simeq KX$ as coalgebras.

\end{examples}

\section{Applications to coalgebras}

We give an application of the above constructions to coalgebras.
We note a connection between semiperfect coalgebras and notions of
coreflexivity for coalgebras. In this section $C$ will be a
coalgebra with counit. Then there is a decomposition into
indecomposable left comodules $C=\bigoplus\limits_i E(S_i)$, where
$S_i$ are simple left $C$-comodules and $E(S_i)$ is an injective
hull of $S_i$ contained in $C$. A similar decomposition holds on
the right: $C=\bigoplus\limits_jE(T_j)$. Recall from \cite{L} that
a coalgebra is left (right) semiperfect if the right (left)
injective indecomposable comodules $E(T_j)$ (respectively,
$E(S_i)$) are finite dimensional. We recall from \cite[Theorem
3.2.3]{DNR} that the following are equivalent for a coalgebra $C$:
(i) $C$ is right semiperfect; (ii) The category $\Mm^C$ has enough
projectives; (iii) $Rat({}_{C^*}C^*)$ is dense in $C^*$ in the
finite topology of $C^*$; (iv) Any finite dimensional right
$C$-comodule has a projective cover.

We also recall from \cite{HR,R,Rad} that a coalgebra is called
coreflexive if the canonical map $C\rightarrow (C^*)^0$ is
bijective (it is always injective). Given a coalgebra $C$, besides
the dual algebra $C^*$, there are also two versions of dual
(co)modules, namely $Rat(_{C^*}C^*)$, the left rational dual
module (which is a right comodule), and $Rat(C^*_{C^*})$, the
right rational dual module. These have been considered before in
relation to self-duality properties of coalgebras and Hopf
algebras (see for example, \cite{I,I1,Su,Sw,Sw1}), and play an
important role in the theory of Hopf algebras with non-zero
integral. These duals are ideals of $C^*$, so they can also be
regarded as non-unital algebras. We can define a map
$\phi_l:C\rightarrow (Rat({}_{C^*}C^*))^0$ in the natural way by
$\phi_l(c)(c^*)=c^*(c)$. We note that $Ker\, \phi_l(c)=c^\perp
\cap Rat(_{C^*}C^*)$ has finite codimension in $Rat(_{C^*}C^*)$,
since it contains $D^\perp \cap Rat(_{C^*}C^*)$, where $D$ is the
subcoalgebra of $C$ generated by $c$. Since $D$ is finite
dimensional, then $D^\perp$ has finite codimension in  $C^*$, and
then $D^\perp \cap Rat(_{C^*}C^*)$ has finite codimension in
$Rat(_{C^*}C^*)$. Thus $\phi_l(c)$ lies indeed in
$(Rat({}_{C^*}C^*))^0$. It is easy to see that $\phi_l$ is a
morphism of coalgebras. We introduce the following

\begin{definition}
Let $C$ be a coalgebra. We call $C$ left coreflexive if the map
$\phi_l$ is bijective.
\end{definition}

\begin{proposition}
If $C$ is left coreflexive, then any finite dimensional left
$Rat(_{C^*}C^*)$-module $M$ has a structure of a right
$C$-comodule such that the associated left $C^*$-module structure
gives by restriction of scalars the initial
$Rat(_{C^*}C^*)$-module structure of $M$.
\end{proposition}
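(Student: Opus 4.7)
The plan is to realize the $C$-comodule structure on $M$ as the composition of three essentially formal steps: (1) use finite-dimensionality of $M$ to make it a comodule over the finite dual of $R := Rat({}_{C^*}C^*)$, (2) transport along the coalgebra isomorphism $\phi_l^{-1} : R^0 \to C$ that exists by the left coreflexivity hypothesis, and then (3) check that the resulting $C^*$-action, when restricted to the subalgebra $R \subseteq C^*$, coincides with the one we started with. Only the last step requires any verification; the first two are bookkeeping.

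First I would observe that, being finite dimensional, $M$ is a locally finite left $R$-module. Hence Proposition \ref{isocategories} (applied to the non-unital algebra $R$) produces a right $R^0$-comodule structure $\rho : M \to M \otimes R^0$. Explicitly, for each $m \in M$ one chooses a basis $m_1,\dots,m_n$ of $Rm + \KK m$ and defines $a_1^*,\dots,a_n^* \in R^0$ by $rm = \sum_i a_i^*(r) m_i$ for all $r \in R$; then $\rho(m) = \sum_i m_i \otimes a_i^*$. Since $C$ is left coreflexive, $\phi_l : C \to R^0$ is an isomorphism of coalgebras, so composing $\rho$ with $(\mathrm{id}_M \otimes \phi_l^{-1}) : M \otimes R^0 \to M \otimes C$ yields a right $C$-comodule structure on $M$, namely $m \mapsto \sum_i m_i \otimes c_i$ with $c_i := \phi_l^{-1}(a_i^*)$.

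The point that has to be checked is that the induced left $C^*$-module action recovers the original $R$-module action under restriction. The $C^*$-action associated to the $C$-comodule structure just constructed is $c^* \cdot m = \sum_i c^*(c_i) m_i$. Taking $c^* = r \in R \subseteq C^*$ and using the defining formula $\phi_l(c_i)(r) = r(c_i)$ together with $\phi_l(c_i) = a_i^*$, we get
\[
r \cdot m = \sum_i r(c_i) m_i = \sum_i \phi_l(c_i)(r)\, m_i = \sum_i a_i^*(r) m_i = r m,
\]
which matches the original $R$-module structure by the very definition of the $a_i^*$.

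The main obstacle is essentially presentational: one must be careful that the $a_i^*$ really do lie in $R^0$ (a fact supplied by Proposition \ref{isocategories}) and that $\phi_l$ is a genuine coalgebra isomorphism, not merely a bijection of sets, so that transporting the comodule structure is legitimate. Once those are in hand, the verification above is immediate, and no further computation is needed.
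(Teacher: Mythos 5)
Your argument is correct and follows essentially the same route as the paper's own proof: apply Proposition \ref{isocategories} to the finite dimensional (hence locally finite) module over $Rat({}_{C^*}C^*)$, transport the resulting $R^0$-comodule structure along the isomorphism $\phi_l^{-1}$, and verify via $r(c_i)=\phi_l(c_i)(r)=a_i^*(r)$ that the induced $C^*$-action restricts to the original one. The only difference is that you write the coaction explicitly in a basis whereas the paper uses Sweedler-type notation; the content is identical.
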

\begin{proof}
Since $M$ has finite dimension, it is locally finite, and then by
 Proposition \ref{isocategories}, $M$ is a right $Rat({}_{C^*}C^*)^0$-comodule
with coaction $m\mapsto \sum m_{[0]}\otimes m_{[1]}$ such that the
$Rat(_{C^*}C^*)$-action on $M$ is given by $c^*m=\sum
m_{[1]}(c^*)m_{[0]}$ for any $c^*\in Rat({}_{C^*}C^*)$ and $m\in
M$. Then $M$ becomes a right $C$-comodule (not necessarily
counital) via the coalgebra isomorphism $\phi_l$, i.e. the
comodule structure is given by $m\mapsto \sum m_0\otimes m_1=\sum
m_{[0]}\otimes \phi_l^{-1}(m_{[1]})$. This right $C$-comodule
structure induces a new left $C^*$-module structure on $M$, with
action denoted by $c^*\cdot m$ for $c^*\in C^*$ and $m\in M$. We
have that $c^*\cdot m=\sum c^*(m_1)m_0=\sum
c^*(\phi_l^{-1}(m_{[1]}))m_{[0]}$ for any $c^*\in C^*$ and $m\in
M$. In the case where $c^*\in Rat({}_{C^*}C^*)$ we have that
$c^*(\phi_l^{-1}(m_{[1]}))=\phi_l(\phi_l^{-1}(m_{[1]}))(c^*)=m_{[1]}(c^*)$,
so then $c^*\cdot m=\sum c^*(\phi_l^{-1}(m_{[1]}))m_{[0]}=\sum
m_{[1]}(c^*)m_{[0]}=c^*m$. Thus restricting this $C^*$-action on
$M$ to $Rat({}_{C^*}C^*)$ gives the initial action.
\end{proof}

The following explains and extends results of \cite{DIN1} about
path coalgebras and incidence coalgebras.

\begin{theorem} \label{maintheorem}
Let $C$ be a coalgebra. Then the following assertions are true.\\
(i) $C$ is right semiperfect if and only if $\phi_l$ is injective. In particular if $C$ is left coreflexive, then $C$ is right semiperfect.\\
(ii) If $C$ is right semiperfect and the coradical $C_0$ of $C$ is
coreflexive, then $C$ is left coreflexive.
\end{theorem}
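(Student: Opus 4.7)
\medskip

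\textbf{Plan for (i).} The kernel of $\phi_l$ is exactly $R^\perp := \{c \in C : r(c) = 0 \text{ for all } r \in R\}$, where $R = Rat({}_{C^*}C^*)$. A standard property of the finite topology on $C^*$ says that a subspace $V \subseteq C^*$ is dense if and only if $V^\perp = 0$ in $C$. Combining this with the characterization of right semiperfectness recalled in the introduction to this section (density of $Rat({}_{C^*}C^*)$ in $C^*$), we get that $\phi_l$ is injective iff $R$ is dense in $C^*$ iff $C$ is right semiperfect. The ``in particular'' statement follows since left coreflexivity forces $\phi_l$ to be, in particular, injective.

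\medskip

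\textbf{Plan for (ii).} By (i), $\phi_l$ is already injective, so only surjectivity needs to be shown. I would proceed in two stages.

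\medskip

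\emph{Stage A: identifying coradicals.} I aim to prove $\phi_l(C_0) = (R^0)_0$. A simple subcoalgebra $E$ of $R^0$ is finite dimensional by the fundamental theorem of coalgebras. Dualizing the inclusion $E \hookrightarrow R^0$ and composing with the canonical algebra map $R \to (R^0)^*$ yields a surjective algebra map $R \twoheadrightarrow E^*$ onto a finite-dimensional simple algebra. Using density of $R$ in $C^*$ (right semiperfectness), the plan is to lift this to a quotient $C^* \twoheadrightarrow E^*$: concretely, the cofinite two-sided ideal $\ker(R \to E^*)$ should be shown to be of the form $F^\perp \cap R$ for a finite-dimensional subcoalgebra $F \subseteq C$, whence $E^*$ becomes a finite-dimensional simple quotient algebra of $C^*$ (i.e., $E \subseteq (C^*)^0$). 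At this point the hypothesis that $C_0$ is coreflexive kicks in: simple subcoalgebras of $(C^*)^0$ are then precisely the simple subcoalgebras $D_i$ of $C_0$, identified via the canonical map $\phi:C\to (C^*)^0$. Chasing $\phi_l = \text{(restriction to $R$)}\circ \phi$ then identifies $E$ with $\phi_l(D_i)$ for some $i$, giving $(R^0)_0 \subseteq \phi_l(C_0)$; the reverse inclusion is automatic since $\phi_l$ is a coalgebra morphism.

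\medskip

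\emph{Stage B: propagation along the coradical filtration.} Having established $\phi_l(C_0) = (R^0)_0$, I would induct on $n$ that $\phi_l(C_n) = (R^0)_n$, using the wedge characterization $(R^0)_{n+1} = \{g \in R^0 : \Delta g \in (R^0)_0 \otimes R^0 + R^0 \otimes (R^0)_n\}$ and the analogous formula in $C$. Injectivity of $\phi_l$ together with the inductive identifications at levels $0$ and $n$ allows one to lift any $g \in (R^0)_{n+1}$ uniquely back to an element of $C_{n+1}$ whose image has the required comultiplication. Since $R^0 = \bigcup_n (R^0)_n$, this gives surjectivity and hence that $\phi_l$ is an isomorphism.

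\medskip

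\textbf{Main obstacle.} The technical heart is in Stage A, namely showing that a cofinite two-sided ideal $I$ of $R$ corresponding to a finite-dimensional simple quotient must be the restriction of a cofinite ideal of $C^*$. Density in the finite topology does not a priori extend arbitrary finite-dimensional representations of $R$ to $C^*$-representations: one needs the ideal $I$ to be \emph{open} in the subspace topology on $R$, i.e., to contain $F^\perp \cap R$ for some finite-dimensional $F \subseteq C$. It is the combined use of right semiperfectness (density) and coreflexivity of $C_0$ (control over finite-dimensional simple $C^*$-modules) that produces the required finite-dimensional subcoalgebra. Stage B's inductive step is also not a formal consequence of injectivity + agreement on coradicals (a subcoalgebra strictly larger than the coradical can be a proper subcoalgebra, as with low truncations of divided-power coalgebras), so the specific form $\phi_l = \text{res}\circ\phi$ and the compatibility of coradical filtrations through the finite-dual construction will have to be used essentially.
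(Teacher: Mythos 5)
Your part (i) is correct and is exactly the paper's argument: $\Ker\phi_l=Rat({}_{C^*}C^*)^\perp$, and density in the finite topology is equivalent to the perp being zero.

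For part (ii), however, what you have written is a roadmap rather than a proof, and the two places you yourself flag as ``the technical heart'' are precisely where the argument is missing, not merely deferred. In Stage A, the claim that a cofinite two-sided ideal $I$ of $R=Rat({}_{C^*}C^*)$ is open (i.e.\ contains $F^\perp\cap R$ for a finite-dimensional subcoalgebra $F$) is the whole content of surjectivity, and you give no mechanism for producing $F$; note also that coreflexivity of $C_0$ only controls finite-dimensional $C_0^*$-modules, so passing from simple quotients of $R$ to simple subcoalgebras of $C_0$ still requires exactly the lifting you leave open. In Stage B, as you concede, an injective coalgebra map that is surjective onto the coradical need not be surjective (a proper subcoalgebra containing $D_0$ satisfies $E_0=D_0$), and knowing $\Delta g\in (R^0)_0\otimes R^0+R^0\otimes (R^0)_n$ with the right-hand factors in the image does not by itself place $g$ in the image; no concrete substitute argument is offered. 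So both stages contain genuine, unresolved gaps.

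For contrast, the paper's proof is more direct and avoids the coradical filtration entirely. It first invokes the external result that $C_0$ coreflexive together with $C$ right semiperfect forces $C$ itself to be coreflexive, so every finite-dimensional $C^*$-module is rational. It then uses the decomposition $C=\bigoplus_j E(S_j)$ with each $E(S_j)$ finite dimensional (right semiperfectness), the induced identification $C^*\simeq\prod_j E(S_j)^*$ with orthogonal idempotents $e_j$, and argues as follows: given $f\in R^0$ vanishing on a cofinite two-sided ideal $I$ of $R$, the finite-dimensional module $R/I$ is rational, so its coefficient coalgebra sits inside $\bigoplus_{j\in F}E(S_j)$ for a finite set $F$; this yields $Re\subseteq I$ for $e=\varepsilon-\sum_{j\in F}e_j$, which is exactly the ``openness'' you were missing. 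The restriction of $f$ to the finite-dimensional space $\bigoplus_{j\in F}E(S_j)^*$ is then represented by an element $x\in\bigoplus_{j\in F}E(S_j)$, and the decomposition $u=ug+ue$ with $g=\sum_{j\in F}e_j$ shows $f=\phi_l(x)$. If you want to salvage your outline, the rationality of $R/I$ (via coreflexivity of $C$) plus the finite dimensionality of the $E(S_j)$ is the input that must replace your Stage A, and once you have it the filtration induction of Stage B becomes unnecessary.
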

\begin{proof}
(i) The map $\phi_l$ is injective if and only if $Ker\,
\phi_l=Rat({}_{C^*}C^*)^\perp=\{c\in C| c^*(c)=0,\,\forall c^*\in
Rat({}_{C^*}C^*)\}=0$,
which means that $Rat({}_{C^*}C^*)$ is dense in $C^*$ (see, for example, \cite[Corollary 1.2.9]{DNR}), i.e. $C$ is right semiperfect.\\
(ii) 
It remains to prove the surjectivity of $\phi_l$. Since $C_0$ is
coreflexive and $C$ is right semiperfect, by
\cite[Corollary 4.10]{I2} we see that $C$ is coreflexive. Let
$C=\bigoplus\limits_{j\in J}E(S_j)$ be the decomposition into
indecomposable left comodules as before. Since $C$ is right
semiperfect we have that each $E(S_j)$ is finite dimensional. Then
$C^*\simeq\prod\limits_jE(S_j)^*$ as left $C^*$-modules. In fact
we identify $C^*$ with $\prod\limits_jE(S_j)^*$, by regarding
$E(S_j)^*$ as the set of elements $c^*\in C^*$ such that
$c^*(E(S_p))=0$ for any $p\neq j$. For any $j\in J$ we denote by
$e_j$ the element of $C^*$ which agrees with $\epsilon$ on
$E(S_j)$ and vanishes on any $E(S_p)$ with $p\neq j$. Then
$E(S_j)^*=C^*e_j$ for any $j$, and $(e_j)_{j\in J}$ is a set of
orthogonal idempotents in $C^*$.

Denote $R=Rat({}_{C^*}C^*)$; we have that
$\bigoplus\limits_jE(S_j)^*\subseteq R$. Let $f\in R^0$, and let
$I$ be a cofinite two-sided ideal of $R$ such that $I\subseteq
\ker(f)$. The right $C^*$-module $R/I$ is finite dimensional, so
it is rational, since $C$ is coreflexive. Let $\rho:R/I\rightarrow
C\otimes R/I$ be a left coaction compatible with the right
$C^*$-module structure. The coalgebra of coefficients $D=cf(R/I)$
of $R/I$ is finite dimensional, so there is a finite set $F\subset
J$ such that $D\subseteq \bigoplus\limits_{j\in F}E(S_j)$. Let
$e=\varepsilon -\sum\limits_{j\in F}e_j$. Note that $R/I\cdot
e=0$, so $Re\subseteq I$.

Now $f_{|\oplus_{j\in F}E(S_j)^*}\in (\oplus_{j\in
F}E(S_j)^*)^*\simeq (\oplus_{j\in F}E(S_j))^{**}$. Since
$$\gamma:\oplus_{j\in F}E(S_j)\ra (\oplus_{j\in F}E(S_j))^{**},
\gamma(c)(c^*)=c^*(c)$$ is an isomorphism, we obtain that there is
$x\in \oplus_{j\in F}E(S_j)$ such that $f_{|\oplus_{j\in
F}E(S_j)^*}=\gamma(x)$, i.e. $f(c^*)=c^*(x)$ for any $c^*\in
\oplus_{j\in F}E(S_j)^*$.


We show that $f(u)=u(x)$ for all $u \in R$. Let
$g=\sum\limits_{j\in F}e_j$. Then  $u=ug+ue$, $ug\in \oplus_{j\in
F}E(S_j)^*$ and $ue\in Re\subseteq I$. Thus $f(ug)=(ug)(x)$,
$f(ue)=0$ and $f(u)=(ug)(x)$. On the other hand $\delta(x)=\sum
x_1\otimes x_2\in C\otimes \oplus_{j\in F}E(S_j)$, and we have
$(ue)(x)=\sum u(x_1)e(x_2)=0$. Therefore, we get
$f(u)=(ug)(x)=(ug)(x)+(ue)(x)=(ug+ue)(x)=u(x)$, and this ends the
proof.
\end{proof}

\begin{corollary}\label{c.main}
Let $C$ be a coalgebra such that the coradical $C_0$ is
coreflexive. Then $C$ is left coreflexive if and only if $C$ is
right semiperfect. In this case, $C$ is also coreflexive.
\end{corollary}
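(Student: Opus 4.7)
The plan is to deduce the corollary directly from Theorem \ref{maintheorem}, since both implications are essentially repackagings of statements already proved there.

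For the \emph{only if} direction, suppose $C$ is left coreflexive, so $\phi_l$ is bijective and in particular injective. By Theorem \ref{maintheorem}(i), injectivity of $\phi_l$ is equivalent to $C$ being right semiperfect, and this direction uses neither the hypothesis that $C_0$ is coreflexive nor any additional work. For the \emph{if} direction, assume $C$ is right semiperfect; combining this with the standing hypothesis that $C_0$ is coreflexive places us in the setting of Theorem \ref{maintheorem}(ii), which concludes that $\phi_l$ is an isomorphism, i.e.\ $C$ is left coreflexive.

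The final claim that $C$ is also coreflexive (in the classical Radford--Taft sense) requires no new argument: it is exactly the invocation of \cite[Corollary 4.10]{I2} already used inside the proof of Theorem \ref{maintheorem}(ii), which asserts that a right semiperfect coalgebra with coreflexive coradical is coreflexive. Since the two equivalent conditions in the corollary force $C$ to be right semiperfect with $C_0$ coreflexive, that result applies. There is no real obstacle here; all substantive work is absorbed into Theorem \ref{maintheorem}, and the corollary's content is the interpretive observation that, under the mild hypothesis on $C_0$, the new left coreflexivity notion, right semiperfectness, and ordinary coreflexivity align.
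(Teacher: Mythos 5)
Your proof is correct and follows exactly the route the paper takes: the equivalence is read off from Theorem \ref{maintheorem}(i) and (ii), and the final coreflexivity claim is the same invocation of \cite[Corollary 4.10]{I2}. Nothing further is needed.
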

\begin{proof}
The equivalence follows from the previous Theorem
\ref{maintheorem}. If $C_0$ is coreflexive and $C$ is right
semiperfect, then $C$ is coreflexive by \cite[Corollary 4.10]{I2}.
\end{proof}

\begin{example}
It is possible that a coalgebra $C$ is coreflexive (so then its
coradical is also coreflexive, as a subcoalgebra of a coreflexive
coalgebra, see \cite[Proposition 3.1.4]{HR}), but $C$ is not left
coreflexive. Indeed, let $\Gamma$ be the infinite line quiver,
i.e. the vertices of $\Gamma$ are the integers, and there is an
arrow from $n$ to $n+1$ for any integer $n$. Then $C_0$ is the
grouplike coalgebra of the set $\mathbb Z$ of integers, so it is
coreflexive by \cite{HR} (see the comments after this example). By
\cite[Proposition 5.4]{DIN1}, $C$ is also coreflexive. On the
other hand, for any integer $n$ there exist infinitely many paths
starting from $n$ and infinitely many paths ending at $n$, so $C$
is neither left semiperfect nor right semiperfect, see
\cite[Corollary 6.3]{chin}. Therefore $C$ is not left coreflexive.
\end{example}

We note that the condition that $C_0$ is coreflexive in Theorem
\ref{maintheorem}(ii) is not a restrictive condition. Using the
terminology of \cite[Section 3.7]{HR}, a set $X$ is called
reasonable if every ultrafilter on $X$ closed under countable
intersections is principal. Such sets are called non-measurable
in set theoretic terminology. Countable sets are reasonable, power
sets and subsets of reasonable sets are reasonable, and unions and
direct products of families of reasonable sets, indexed by
reasonable sets, are also reasonable. Thus the class of reasonable
sets is very large, essentially every set we work with is
reasonable. It is proved in \cite[Theorem 3.7.5]{HR} that if the
basefield is infinite and the set of simple subcoalgebras of $C$
is reasonable, then $C_0$ is coreflexive.

We see by Corollary \ref{c.main} that if the set of simple
subcoalgebras of $C$ is reasonable and $k$ is infinite, then $C$
is left coreflexive if and only if $C$ is right semiperfect, thus
the two concepts coincide. Under the same conditions, we also have
that if $C$ is left coreflexive, then $C$ is coreflexive.

In the case where $C$ is left and right semiperfect, the
assumption on $C_0$ in Corollary \ref{c.main}, or on the set
of simple subcoalgebras of $C$ being reasonable in the discussion
above are no longer necessary.

\begin{theorem}\label{t.main2}
Let $C$ be a left and right semiperfect coalgebra. Then $C$ is
left coreflexive. Moreover, $\phi_l$ is an isomorphism of counital
coalgebras.
\end{theorem}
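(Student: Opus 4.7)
The plan is to start from the injectivity of $\phi_l$ granted by Theorem \ref{maintheorem}(i) (which needs only right semiperfectness), and then upgrade the argument of Theorem \ref{maintheorem}(ii), replacing the coreflexivity hypothesis by structural information that the second semiperfectness assumption provides on the non-unital algebra $R := \Rat({}_{C^*}C^*)$.

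The first key step will be to identify $R$ as an honest direct sum: $R = \bigoplus_{j \in J} E(S_j)^*$, where $C = \bigoplus_j E(S_j)$ is the decomposition into indecomposable injective left comodules. The inclusion $\supseteq$ uses only right semiperfectness (each $E(S_j)^*$ is finite-dimensional, hence rational), together with the projection formula $r e_j = r_j$ for the orthogonal idempotents $e_j \in C^*$ attached to the decomposition: a short counit computation shows that for $c \in E(S_k)$, $(re_j)(c) = \sum r(c_1)e_j(c_2)$ equals $r(c)$ when $k=j$ and vanishes otherwise. The inclusion $\subseteq$ is where left semiperfectness enters: it gives $R = \Rat(C^*_{C^*})$, so $rC^*$ is finite-dimensional for every $r \in R$; since $re_j \in rC^* \cap E(S_j)^*$ and the subspaces $E(S_j)^*$ sit in $C^*$ in direct sum, only finitely many $re_j$ can be nonzero, and so $r = \sum_j re_j$ lives in $\bigoplus_j E(S_j)^*$.

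With this in hand the surjectivity of $\phi_l$ proceeds along the lines of Theorem \ref{maintheorem}(ii), now on purely combinatorial grounds. Given $f \in R^0$ and a cofinite two-sided ideal $I \subseteq \ker f$, the images $\bar e_j \in R/I$ are orthogonal idempotents in the finite-dimensional algebra $R/I$, hence only finitely many are nonzero, say those with $j \in F$. For $j \notin F$ we then have $e_j \in I$, and consequently $r_j = re_j \in R \cdot I \subseteq I$ for every $r \in R$; so $f(r) = f(r_F)$ with $r_F := \sum_{j \in F} r_j \in V^* := \bigoplus_{j \in F} E(S_j)^*$. Finite-dimensional duality yields an $x \in V := \bigoplus_{j \in F} E(S_j) \subseteq C$ with $f|_{V^*} = \mathrm{ev}_x$, and since $x$ has no $E(S_j)$-component for $j \notin F$ one checks $r(x) = r_F(x)$; this gives $f = \phi_l(x)$.

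For the counital-coalgebra claim, $\phi_l$ being a coalgebra isomorphism automatically transports $\varepsilon_C$ to a counit $\varepsilon'$ on $R^0$, so the map is counital; explicitly this takes the form $\varepsilon'(f) = f(e_F)$ for $F$ as above, via the identity $e_F(x) = \sum_{j \in F} \varepsilon_C(x_j) = \varepsilon_C(x)$. The main obstacle is the identification $R = \bigoplus_j E(S_j)^*$: right semiperfectness alone is genuinely insufficient, since rational functionals can have infinite support in the $E(S_j)^*$-decomposition (for instance in the path coalgebra of a quiver consisting of one source with infinitely many outgoing arrows, a functional $\sum_i \alpha_i a_i^*$ on the arrows is rational regardless of which coefficients $\alpha_i$ vanish), so it is precisely here that the left semiperfectness hypothesis must be deployed.
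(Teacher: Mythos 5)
Your proposal is correct and follows essentially the same route as the paper's proof: identify $R=\bigoplus_{j}E(S_j)^*$ as an algebra with enough idempotents $(e_j)_j$, observe that a cofinite two-sided ideal of $R$ must contain all but finitely many of the $e_j$, and realize $f\in R^0$ as evaluation at some $x$ in the corresponding finite-dimensional piece $\bigoplus_{j\in F}E(S_j)$. The only difference is that you prove inline the ingredients the paper imports from \cite{DNR} and \cite{DIN1} (the direct-sum description of $R$ via $re_j=r_j$, the lemma that only finitely many orthogonal idempotents survive in a finite-dimensional quotient, and the counit $E(f)=\sum_j f(e_j)$ on $R^0$), which is a perfectly sound self-contained substitute.
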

\begin{proof}
 Since $C$ is left and right semiperfect, we
have that
$R=Rat({}_{C^*}C^*)=Rat(C^*_{C^*})=\bigoplus\limits_{j\in
J}E(S_j)^*$, and this is an algebra with enough idempotents
$(e_j)_{j\in J}$ (see \cite[Section 3.3]{DNR}), where $e_j$ is
defined as in the proof of Theorem \ref{maintheorem}.

Let $f\in R^0$ and $I$ an ideal of finite codimension in $R$ such
that $I\subseteq Ker(f)$. By \cite[Lemma 2.1]{DIN1}, only finitely
many of the idempotents $(e_j)_{j\in J}$ can lie outside $I$. Then
there exists a finite subset $F$ of $J$ such that
$\bigoplus\limits_{j\in J\setminus F}E(S_j)^*\subseteq I$. As in
the proof of Theorem \ref{maintheorem}(ii), there exists $x\in
\oplus_{j\in F}E(S_j)$ such that  $f(c^*)=c^*(x)$ for any $c^*\in
\oplus_{j\in F}E(S_j)^*$. Now if $c^*\in \oplus_{j\in J\setminus
F}E(S_j)^*$, then $c^*\in I$, so $f(c^*)=0$, and $c^*(x)=0$ since
$x\in \oplus_{j\in F}E(S_j)$. Thus $f(c^*)=c^*(x)$, so
$f=\phi_l(x)$. This shows that $\phi_l$ is bijective.

By \cite[Proposition 2.4]{DIN1} the coalgebra $R^0$ has counit $E$
defined by $E(f)=\sum_jf(e_j)$. Then
$(E\phi_l)(c)=\sum_j\phi_l(c)(e_j)=\sum_je_j(c)=\varepsilon (c)$,
so $\phi_l$ is a morphism of counital coalgebras.

\end{proof}

\subsection{Examples in Hopf algebras}

{Before applying the above result to Hopf algebras, in order to motivate it
we recall here a class of coreflexive Hopf algebras considered in the introduction.
Let $G$ be an algebraic group over $\CC$, and let $L$ be its Lie algebra. Let
$H$ be the algebra of functions on $G$.

\begin{proposition}\label{p.algr}
With the above notations, if $L$ is a finite dimensional Lie
algebra (in particular in the case where $G$ is an affine
algebraic group), then the Hopf algebras $U(L)$ and $H^0$ are
coreflexive.
\end{proposition}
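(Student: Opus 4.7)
The plan is to prove the two coreflexivity claims in sequence, using general criteria from \cite{HR} and \cite{I2}. First I would show $U(L)$ is coreflexive. Since $U(L)$ is a pointed irreducible coalgebra with coradical $\CC\cdot 1$ (trivially coreflexive) and $\dim L<\infty$, the PBW theorem in characteristic zero implies that the coradical filtration $U(L)_n$ coincides with the PBW-degree filtration, so each $U(L)_n$ is finite-dimensional. The coreflexivity of $U(L)$ then follows from the criterion, proved in \cite{HR} and strengthened in \cite{I2}, that a pointed irreducible coalgebra with coreflexive coradical and all coradical-filtration terms finite-dimensional is coreflexive.

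For $H^0$, I would invoke the Cartier--Gabriel--Kostant--Milnor--Moore theorem: $H^0=\CC[G]\smashco U(L)$, and as a coalgebra $H^0=\bigoplus_{g\in G} U(L)_g$, where each component $U(L)_g$ is a translate of $U(L)$, hence coreflexive by the first step. The coradical of $H^0$ is $\CC[G]$, the grouplike coalgebra on the set of points of $G$. Since $\CC$ is infinite and the underlying set of $G$ is reasonable in the sense of \cite[Section 3.7]{HR} (countable sets and sets of cardinality continuum are non-measurable), $\CC[G]$ is coreflexive by \cite[Theorem 3.7.5]{HR}. I would then apply the gluing principle that a direct sum of coreflexive coalgebras indexed by a reasonable set whose coradical is itself coreflexive is coreflexive, which follows from the results of \cite{I2} (and is implicit in the arguments of \cite{HR} around reasonable sets).

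The main obstacle is this last gluing step: that $\bigoplus_{g\in G} U(L)_g$ is coreflexive, given coreflexivity of each summand and of the coradical $\CC[G]$. The key technical point is that every cofinite two-sided ideal of $H^{0\,*}\cong\prod_g U(L)^*$ is controlled by only finitely many $U(L)^*$-factors modulo cofinite ideals inside those factors --- this is precisely where the non-measurability (reasonable-set) hypothesis on $G$ enters, via the behaviour of ultrafilters on $G$ closed under countable intersections. Once this reduction is in hand, every finite-dimensional $H^{0\,*}$-module decomposes as a sum of rational modules over finitely many $U(L)^*$-factors and is therefore rational, which gives coreflexivity of $H^0$.
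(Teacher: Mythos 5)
Your argument is correct. For $U(L)$ it coincides with the paper's: coreflexivity is deduced from the coradical being $\CC 1$ and the primitive space being finite dimensional (so that, in characteristic zero, the coradical filtration is the PBW filtration and $U(L)$ is of finite type in the sense of \cite{HR}); the paper cites \cite{HR} and \cite{I2} for exactly this. For $H^0$ you follow the route the paper mentions only in its final sentence as an alternative, namely the direct check via $H^0=\CC[G]\otimes U(L)=\bigoplus_{G}U(L)$ as coalgebras; the paper's primary arguments are instead that $H^0$ is cocommutative with finite-dimensional primitive space and grouplike set $G$, so that \cite[5.1.3 Corollary]{HR} applies, or that $U(L)$ is strongly coreflexive by \cite[Theorem 3.4.3]{HR} while $\CC[G]$ is coreflexive, so that \cite[Theorem 4.4]{Rad} gives coreflexivity of the tensor product. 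Your gluing step does work, and it is where all the content lies: in $(H^0)^*\cong\prod_{g\in G}U(L)^*$ the counits $\epsilon_g$ of the blocks are \emph{central} idempotents spanning a unital subalgebra isomorphic to $(\CC[G])^*$; since $G$ has cardinality at most that of the continuum it is reasonable, so $\CC[G]$ is coreflexive and any finite-dimensional $(H^0)^*$-module $M$ is rational over $(\CC[G])^*$, hence $M=\bigoplus_{g\in F}\epsilon_g M$ for a finite $F\subseteq G$, with each $\epsilon_g M$ a finite-dimensional $U(L)^*$-module and therefore rational because $U(L)$ is coreflexive. Two small cautions: the centrality of the $\epsilon_g$ should be stated explicitly, since it is what makes each $\epsilon_g M$ an $(H^0)^*$-submodule; and the direct-sum gluing lemma is really the content of the direct-sum results of \cite{DIN1} rather than of \cite[Corollary 4.10]{I2}, which the paper invokes only under a semiperfectness hypothesis that $H^0$ does not satisfy ($U(L)$ is an infinite-dimensional indecomposable injective over itself). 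What your route buys is independence from cocommutativity and from Radford's tensor-product theorem; what the paper's citations buy is brevity.
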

\begin{proof}
If $L$ is a finite dimensional Lie algebra over $\CC$, by \cite{HR}, $U(L)$
is coreflexive, since the coradical of $U(L)$ is 1-dimensional
and the space of primitives is finite dimensional (see also the results
of \cite[Sections 2 \& 4]{I2}). With the notations for $G$ and $H$ as above,
since $H^0=\CC[G]\otimes U(L)$ as coalgebras. To show that $H^0$ is coreflexive, one
can notice that $H^0$ is cocommutative, and the space of primitives is finite dimensional
(and the set of grouplikes is $G$); thus, \cite[5.1.3 Corollary]{HR} applies and
$H^0$ is coreflexive. Alternatively, one can use \cite[Theorem 3.4.3]{HR} to get
that $U(L)$ is strongly coreflexive, and since $\CC[G]$ is coreflexive, by
\cite[Theorem 4.4]{Rad} $\CC[G]\otimes U(L)$ is coreflexive. \\
It is also possible to check the coreflexivity of $H^0$ directly, using the fact
that $H^0=\CC[G]\otimes U(L)=\bigoplus\limits_{G}U(L)$ as coalgebras.
\end{proof}
}

Since a Hopf algebra with non-zero integrals is left and right
semiperfect as a coalgebra, we obtain as a consequence of Theorem
\ref{t.main2} the following.

\begin{corollary}
Let $H$ be a Hopf algebra with non-zero integrals. Then $H\simeq
(H^{*rat})^0$ as coalgebras.
\end{corollary}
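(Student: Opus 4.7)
The plan is to apply Theorem \ref{t.main2} directly, once we verify that a Hopf algebra with non-zero integrals is simultaneously left and right semiperfect as a coalgebra. First I would recall the classical connection between integrals and semiperfectness: a Hopf algebra $H$ has a non-zero left integral (an element $t \in H^*$ with $h^* t = h^*(1_H) t$ for all $h^* \in H^*$) if and only if $H$ is right semiperfect as a coalgebra, and dually for right integrals. Moreover, for Hopf algebras (unlike for general coalgebras), the existence of a non-zero left integral is equivalent to the existence of a non-zero right integral. Hence if $H$ has non-zero integrals, it is both left and right semiperfect as a coalgebra.

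Given this, the hypothesis of Theorem \ref{t.main2} is satisfied. That theorem then provides an isomorphism of counital coalgebras
$$\phi_l : H \longrightarrow \bigl(Rat({}_{H^*}H^*)\bigr)^{0}.$$
Writing $H^{*rat}$ for the rational left dual $Rat({}_{H^*}H^*)$ (which in this setting also coincides with $Rat(H^*_{H^*})$ by left-right symmetry from the existence of integrals on both sides), this is precisely the asserted isomorphism $H \simeq (H^{*rat})^{0}$.

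The only point that requires any care is ensuring the standard equivalence between the existence of non-zero integrals and two-sided semiperfectness, and the fact that the rational dual $H^{*rat}$ in the statement is unambiguous. Both are classical and can be cited from standard references on Hopf algebras (e.g.\ \cite{DNR} or \cite{Sw}), so there is no genuine obstacle; the corollary is essentially a packaging of Theorem \ref{t.main2} in the Hopf algebra setting.
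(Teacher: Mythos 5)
Your proposal is correct and matches the paper's argument exactly: the paper also derives the corollary by invoking the classical fact that a Hopf algebra with non-zero integrals is left and right semiperfect as a coalgebra and then applying Theorem \ref{t.main2}. The additional remarks about the equivalence of left and right integrals and the coincidence $Rat({}_{H^*}H^*)=Rat(H^*_{H^*})$ are accurate and consistent with the paper's surrounding discussion.
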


Let $H$ be a Hopf algebra with non-zero left integral $t$ on $H$
(in particular $H$ can be any compact quantum group). Then
$H^{*rat}=H\rightharpoonup t$, where $\rightharpoonup$ denotes the
standard left $H$-action on $H^*$, induced by the right $H$-module
structure of $H$ (given by the multiplication of $H$). Then the
above result shows that the coalgebra structure of $H$ can be
reconstructed from the algebra structure of $H$ and a non-zero
integral on $H$. Thus knowing a non-zero integral is quite a
strong information about the structure of $H$.

We also note that, by the discussion above, any Hopf algebra with
non-zero integral is coreflexive as a coalgebra, provided that $k$
is infinite and the set of simple subcoalgebras of $H$ is
reasonable.  This essentially covers all usual examples.


\bigskip\bigskip\bigskip

\begin{center}
\sc Acknowledgment
\end{center}
The research  was supported by the UEFISCDI Grant
PN-II-ID-PCE-2011-3-0635, contract no. 253/5.10.2011 of CNCSIS.
The research of the second author was also supported by  the
strategic grant POSDRU/89/1.5/S/58852, Project ``Postdoctoral
program for training scientific researchers'' cofinanced by the
European Social Fund within the Sectorial Operational Program
Human Resources Development 2007-2013.

\bigskip\bigskip\bigskip

\end{document}